\newtheorem{theorem}{Theorem}[section]
\newtheorem{prop}[theorem]{Proposition}
\newtheorem{definition}[theorem]{Definition}
\theoremstyle{remark}
\newtheorem{remark}[theorem]{Remark}
\newtheorem{example}[theorem]{Example}
\renewcommand{\a}{\alpha}
\renewcommand{\b}{\beta}
\newcommand{\g}{\gamma}
\newcommand{\G}{\Gamma}
\renewcommand{\d}{\delta}
\newcommand{\D}{\Delta}
\newcommand{\z}{\zeta}
\renewcommand{\k}{\kappa}
\newcommand{\m}{\mu}
\newcommand{\s}{\sigma}
\renewcommand{\t}{\tau}
\newcommand{\f}{\phi}
\renewcommand{\P}{\Pi}
\renewcommand{\O}{\Omega}
\newcommand{\C}{{\mathbb C}}
\newcommand{\Z}{{\mathbb Z}}
\newcommand{\N}{{\mathbb{N}}}
\newcommand{\DD}{{\mathbb{D}}}
\newcommand{\LL}{\mathbb{L}}
\newcommand{\cb}{{\mathbf c}}
\newcommand{\mb}{{\mathbf m}}
\newcommand{\nb}{{\mathbf n}}
\newcommand{\Cb}{{\mathbf C}}
\newcommand{\Fb}{{\mathbf F}}
\newcommand{\Ib}{{\mathbf I}}
\newcommand{\Kb}{{\mathbf K}}
\newcommand{\Pb}{{\mathbf P}}
\newcommand{\Sbb}{{\mathbf S}}
\newcommand{\Tb}{{\mathbf T}}
\newcommand{\Ub}{{\mathbf U}}
\newcommand{\AF}{\mathfrak A}
\newcommand{\MF}{\mathfrak M}
\newcommand{\Ac}{{\mathcal A}}
\newcommand{\Bc}{{\mathcal B}}
\newcommand{\Dc}{{\mathcal D}}
\newcommand{\Ec}{{\mathcal E}}
\newcommand{\Lc}{{\mathcal L}}
\newcommand{\Mcc}{{\mathcal M}}
\newcommand{\dist}{{\rm dist}\,}
\newcommand{\supp}{\hbox{{\rm supp}}\,}
\DeclareMathOperator{\im}{{\rm Im}\,}
\newcommand{\Diag}{\operatorname{Diag\,}}
\newcommand{\ccap}{\operatorname{cap}}
\theoremstyle{plain}
\newtheorem*{theorem*}{Theorem}
\theoremstyle{definition}
\newcommand{\la}{\langle}
\newcommand{\ra}{\rangle}
\newcommand{\beq}{\begin{equation}}
\newcommand{\eeq}{\end{equation}}
\numberwithin{equation}{section}
\numberwithin{figure}{section}
\begin{document}

\title[Polyanalytic Bergman spaces]{Toeplitz operators with singular symbols in polyanalytic Bergman spaces on the half-plane}

\author{Grigori Rozenblum }

\address{ Chalmers University of Technology and The University of Gothenburg (Sweden); St.Petersburg State University, Dept. Math. Physics (St.Petersburg, Russia)}

\email{grigori@chalmers.se}
\author {Nikolai Vasilevski}
\address{Cinvestav (Mexico, Mexico-city)}
\email{nvasilev@cinvestav.mx}

\subjclass[2010]{47A75 (primary), 58J50 (secondary)}
\keywords{Bergman space, Toeplitz operators}
\dedicatory{From the first named author, with best wishes to the second named one, \\
on the occasion of his Jubilee}
\begin{abstract}
Using the approach based on sesquilinear forms, we introduce Toeplitz operator in the analytic Bergman  space on the upper half-plane with strongly singular symbols, derivatives of measures. Conditions for boundedness and compactness of such operators are found. A procedure of reduction of  Toeplitz operators in Bergman spaces of polyanalytic functions to  operators with singular symbols in the analytic Bergman space by means of the creation-annihilation structure is elaborated, which leads to the description of the properties of the former operators.
\end{abstract}
\thanks{The first-named author is supported by grant  RFBR No 17-01-00668. }
\maketitle

%\noindent{\footnotesize {\bf Key words}. Neumann-Poincar\'e operator, Eigenvalues, Weyl's law, Pseudo-differential operators, Willmore energy}

%\tableofcontents

%%%%%%%%%%%%%%%%%%%%%%%%%%%%
\section{Introduction}
In a series of papers \cite{RV1,RV2,RV3} the authors developed the approach to defining the Toeplitz operators in Bergman type spaces by means of bounded sesquilinear forms, which permitted them to study Toeplitz operators with strongly singular symbols. The cases of the classical Bergman space of analytic functions on the unit disk  $\mathbb{D}$ and the Fock space on the whole complex plane were considered. The present paper is devoted to the study of such Toeplitz operators in one more classical Bergman space, the one of analytic functions on the upper half-plane  $\Pi$, square integrable with respect to the Lebesgue measure.  It is well known that for sufficiently regular, say, bounded symbols, the theories of Toeplitz operators in the Bergman spaces on the disk and on the upper half-plane are equivalent, in the sense that the M\"obius transform
\begin{equation}\label{Mobius}
    z\mapsto \z =M(z):=\frac{z-i}{1-iz}: \Pi\to\DD,
\end{equation}
generates the mapping
\begin{equation}\label{Mobius isometry}
    \Ub: f\mapsto g=\Ub f, (\Ub f)(z)=f(M(z))\frac{1}{(1-iz)^2}, z\in \Pi,
\end{equation}
which is an isometry of the Bergman spaces $\Ac^2(\DD)$ and $\Ac_1:=\Ac^2(\Pi)$. However, when passing to more singular symbols that generate Toeplitz operators via sesquilinear forms, the boundedness conditions carry over not in such simple way.

In the present paper, we introduce Carleson measures for derivatives of order $k$ ($k$-C measures) for the Bergman space on the half-plane and find  conditions for a given  measure to be a $k$-C measure. As usual, these conditions are sufficient for complex measures but are also necessary for positive ones. An estimate for the properly defined norm of $k$-C measures, with explicit dependence on the derivative order $k$, is obtained. This makes it possible to consider strongly singular symbols, containing derivatives of unbounded order.

The above results are applied for studying Toeplitz operators in polyanalytic Bergman spaces on the upper half-plane, using the creation-annihilation structure discovered in \cite{KarlPessIEOT,V3}.

The first-named author is grateful to the Mittag-Leffler institute for hospitality and support while a considerable part of the paper was written.

\section{The Bergman space on the half-plane and related structures. Singular symbols}
Similar to the hyperbolic metric on the disk, the pseudo-hyperbolic metric on the half-plane is useful.
As known, for $z,w\in\Pi$, the pseudo-hyperbolic distance between $z,w$ is defined by
\begin{equation*} %\label{pseudodistance}
    d(z,w)=\left|\frac{z-w}{z-\overline{w}}\right|.
\end{equation*}
We will denote by $D(z,R)$, $z=x+iy, R<1$, the pseudo-hyperbolic disk in $\Pi$, centered at $z$ with 'radius' $R$. It is easy to check that the disk $D(z,R)$ coincides with the Euclidean disk $B(w,r)$,
\begin{equation} \label{Disks}
    D(x+iy,R)=B(w,r), \ \ \textrm{where} \ \ w=x+i\frac{1+R^2}{1-R^2}y, \ \ r=\frac{2R y}{1-R^2}.
\end{equation}
Thus, the area of the disk $D(z,R)$ equals $\frac{4\pi R^2 y^2}{(1-R^2)^2}$.

Conversely, the Euclidean disk $B(x+i\eta, r) \in \Pi$ is the pseudo-hyperbolic disk $D(x+iy,R)$ with
\begin{equation*}
 R=\frac{\eta}{r}-\sqrt{\frac{\eta^2}{r^2}-1}\quad \textrm{and} \quad y =\frac{1-R^2}{1+R^2}\eta.
\end{equation*}
Note that, while the pseudo-hyperbolic radius is fixed, the $y$-coordinate of the center of the pseudo-hyperbolic disk is proportional to the corresponding coordinate of the Euclidean disk.

Recall that the Bergman space $\Ac_1=\Ac^2(\Pi)$ is a subspace in $L^2(\Pi)$ which consists of analytic in $\Pi$ functions. It is a reproducing kernel space, with the reproducing kernel $\k(z,w)=-(\pi (z-\overline{w})^2)^{-1}.$ Thus, the integral operator $\Pb=\Pb_{\Pi}$ with kernel $\k(z,w)$ is the orthogonal (Bergman) projection of $L^2(\Pi)$ onto $\Ac^2(\Pi)$. This projection is connected with the Bergman projection $\Pb_{\DD}$ for the Bergman space $\Ac^2(\DD)$ by means of the operator $\Ub$ in \eqref{Mobius isometry}:
%\begin{equation*}
\begin{equation}\label{equivProj}
    \Pb_{\Pi}=\Ub^* \Pb_{\DD}\Ub.
\end{equation}
%\end{equation*}

Given a bounded function $a(z), \, z\in \Pi$, the Toeplitz operator in $\Ac_1$ is defined in the usual way as
\begin{equation}\label{ToeplB}
    (\Tb_{a}f) (z)=(\Pb a f)(z)= \int_{\Pi} \k(z,w) a(w)f(w) dA(w),
\end{equation}
$dA$ being the Lebesgue measure. This operator is bounded in $\Ac_1$, as a composition of two bounded operators. The function $a(z)$ is called the \emph{symbol} of the Toeplitz operator. It was the  object of many studies to extend  this definition to symbols being  objects, more singular than bounded functions, still producing bounded operators. This program was implemented, in particular, in \cite{RV1,RV2} for the Bergman space on the disk and for the Fock space. Now we follow the pattern of these papers for the upper half-pane case.

The first stage here is considering (complex) measures as symbols. Let $\m$ be an absolutely continuous measure on $\Pi$, with a bounded density $a(z)$ with respect to the Lebesgue measure $dA$. Then the action of operator \eqref{ToeplB} can be written as
\begin{equation}\label{ToeplMB}
    (\Tb_{a}f) (z)=(\Pb a f)(z)= \int_{\Pi} \k(z,w) a(w)f(w) dA(w)=\int_\Pi \k(z,w) f(w)d\m(w).
\end{equation}
Such a definition of the Toeplitz operator by  means of the expression on the right-hand side of \eqref{ToeplMB}  can be, at least formally, extended to measures $\mu$ which are not necessarily absolutely continuous with respect to $A$ with bounded density, and even to those that are not absolutely continuous with respect to $A$ at all; what is, actually, needed is just the boundedness of the operator defined by the right-hand side in \eqref{ToeplMB}. To find some effective analytical conditions for this boundedness is rather a quite hard task. At the same time the approach based upon sesquilinear forms turns out to be very efficient here. In fact, in case of $d\mu=a(z)dA(z)$, we consider the sesquilinear form $\Fb_{\m}[f,g]=\la \Tb_{a}f,g\ra$, where $f,g\in \Ac_1$. By the above definition of the operator $\Tb_{a}$,
\begin{gather}\label{Form1}
 \Fb_{\m}[f,g]=\la \Pb a f,g\ra=\la af,\Pb g\ra=\la a f,g\ra =\\ \nonumber\int_{\Pi}a(z)f(z)\overline{g(z)}dA(z)=\int_{\Pi}f(z)\overline{g(z)}d\mu(z), \ \ f,g\in\Ac_1.
\end{gather}
The left-hand side in \eqref{Form1} is defined for $a$ being a (sufficiently nice) function, however, the right-hand side makes sense for a measure $\m$ and can be thus used for a definition of a Toeplitz operator. The sesquilinear form \eqref{Form1} defines a bounded operator in $\Ac_1$ in case it is bounded, i.e., $|\Fb_{\m}[f,g]|\le C\|f\|\|g\|$. This boundedness follows, as soon as the  inequality
\begin{equation}\label{Car}
    \left|\int_{\Pi}|f(z)|^2d\m(z)\right|\le C\|f\|^2
\end{equation}
is satisfied for all $f\in\Bc$. This estimate is, surely, satisfied provided
\begin{equation*} %\label{CarMod}
    \int_{\Pi}|f(z)|^2d|\m(z)|\le C\|f\|^2,
\end{equation*}
where $|\m|$ denotes the variation of the measure $\m$. Note that the considerations involving sesquilinear forms are essentially more convenient in analysis since they  evade using reproducing kernels and deal  with inequalities containing only the functions $f,g$ and the measure $\m.$

Measures $\m$ subject to the estimate \eqref{Car} are called Carleson measures for the space $\Ac_1$. A description for such Carleson measures was given for the case of the Bergman  space on the disk, for the Fock space and some other Bergman and Hardy type spaces, see, e.g., \cite{Zhu,Zhu2,ZhuFock}.  The criterion for a measure to be a Carleson measure for $\Ac_1$ follows from the known one for the space $\Ac(\DD)$.

\begin{prop}\label{PropCarlMeasure}
Let $\m$ be a complex  Borel measure on $\Pi$. For a fixed $R\in(0,1)$, if
\begin{equation}\label{CarlSuff}
|\m|(D(z,R))\le C_\m A(D(z,R)),\quad z\in \Pi, \ \ R<1,
\end{equation}
with constant $C_\m$  not depending on $z$, then the sesquilinear form \eqref{Form1} is bounded in $\Ac_1$. That is, the inequality \eqref{Car} is satisfied for all $f\in \Ac_1$, with constant $C=C(R)C_\m$, where $C(R)$ depends only on $R$. If the measure $\m$ is positive, the condition \eqref{CarlSuff} is also necessary for the the sesquilinear form \eqref{Form1} to be bounded.
\end{prop}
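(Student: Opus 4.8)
The plan is to reduce the statement to the classical Carleson–measure criterion on the disk by transporting everything through the Möbius map \eqref{Mobius} and the isometry \eqref{Mobius isometry}. I would recall (see \cite{Zhu}) that a positive Borel measure $\nu$ on $\DD$ is Carleson for $\Ac^2(\DD)$, i.e.\ $\int_\DD|f|^2\,d\nu\le C\|f\|^2$ for all $f\in\Ac^2(\DD)$, if and only if $\nu(D(\zeta_0,R))\le C' A(D(\zeta_0,R))$ uniformly in $\zeta_0\in\DD$, the condition on the variation being sufficient for complex $\nu$. Given $F\in\Ac_1$, I would write $F=\Ub f$ with $f=\Ub^{-1}F\in\Ac^2(\DD)$, so that $\|f\|_{\Ac^2(\DD)}=\|F\|_{\Ac_1}$ and $|F(z)|^2=|f(M(z))|^2\,|1-iz|^{-4}$. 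Then
\begin{equation*}
  \int_\Pi|F(z)|^2\,d|\m|(z)=\int_\Pi|f(M(z))|^2\,|1-iz|^{-4}\,d|\m|(z)=\int_\DD|f(\zeta)|^2\,d\nu(\zeta),
\end{equation*}
where $\nu$ is the push-forward under $M$ of the weighted variation $|1-iz|^{-4}\,d|\m|(z)$. Thus \eqref{Car} follows from the disk criterion as soon as $\nu$ is shown to be a Carleson measure for $\Ac^2(\DD)$.

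The crux is therefore the equivalence of \eqref{CarlSuff} for $\m$ with the disk Carleson condition for $\nu$, for which two ingredients are needed. First, $M$ is an isometry for the pseudo-hyperbolic metrics, hence maps $D(z_0,R)\subset\Pi$ onto the pseudo-hyperbolic disk $D(M(z_0),R)\subset\DD$ of the same radius $R$; consequently, with $\zeta_0=M(z_0)$,
\begin{equation*}
  \nu(D(\zeta_0,R))=\int_{D(z_0,R)}|1-iz|^{-4}\,d|\m|(z).
\end{equation*}
Second, I would show that the weight $|1-iz|^{-4}$ is comparable to a constant on each pseudo-hyperbolic disk of fixed radius: from the identity $|1-iz|^2=4\,\im z/(1-|M(z)|^2)$, together with the standard comparabilities $\im z\asymp\im z_0$ on $D(z_0,R)$ and $1-|\zeta|^2\asymp 1-|\zeta_0|^2$ on $D(\zeta_0,R)$, one gets $|1-iz|^2\asymp|1-iz_0|^2$ on $D(z_0,R)$, with constants depending only on $R$. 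Applying the same computation to $d|\m|=dA$ and using $dA(\zeta)=|M'(z)|^2\,dA(z)=4|1-iz|^{-4}\,dA(z)$ gives $A(D(\zeta_0,R))\asymp|1-iz_0|^{-4}A(D(z_0,R))$. Dividing, the factors $|1-iz_0|^{-4}$ cancel and
\begin{equation*}
  \frac{\nu(D(\zeta_0,R))}{A(D(\zeta_0,R))}\asymp\frac{|\m|(D(z_0,R))}{A(D(z_0,R))},
\end{equation*}
so \eqref{CarlSuff} holds for $\m$ iff $\nu$ satisfies the disk Carleson condition, with constants $C(R)$ depending only on $R$.

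Combining the displays proves sufficiency: \eqref{CarlSuff} makes $\nu$ Carleson on $\DD$, whence $\int_\Pi|F|^2\,d|\m|\le C(R)C_\m\|F\|_{\Ac_1}^2$, which dominates $|\Fb_\m[F,F]|$ and yields boundedness of \eqref{Form1}. For necessity with $\m\ge0$, the measure $\nu$ is positive and the chain of identities is reversible: boundedness of \eqref{Form1} forces $\int_\DD|f|^2\,d\nu\le C\|f\|^2$ for all $f\in\Ac^2(\DD)$, so the disk necessity gives the disk Carleson condition for $\nu$, and the equivalence above returns \eqref{CarlSuff} for $\m$. I expect the only real work to be the geometric equivalence of the two Carleson conditions — i.e.\ verifying that the conformal weight $|1-iz|^{-4}$ is essentially constant on pseudo-hyperbolic disks so that it cancels between the measure and the area; the remainder is a mechanical transcription through $\Ub$. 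As an alternative I would note that one can bypass the disk entirely: cover $\Pi$ by pseudo-hyperbolic disks of radius $R$ with bounded overlap, estimate $\sup_{D(z_j,R)}|f|^2$ by an average of $|f|^2$ over a slightly larger disk via subharmonicity, and sum using \eqref{CarlSuff} and \eqref{Disks}, with necessity obtained by testing against normalized reproducing kernels.
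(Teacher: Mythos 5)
Your argument is correct and is essentially the paper's own proof carried out in detail: the paper simply asserts that the result follows from the disk-case Carleson criterion in \cite{Zhu} via the unitary equivalence \eqref{equivProj} (citing \cite[Proposition 2.6]{Kang} for a direct argument, which is exactly your sketched alternative), and your transfer of the measure through $M$ together with the comparability of $|1-iz|^{-4}$ on pseudo-hyperbolic disks of fixed radius is precisely the substance of that reduction. The only cosmetic point is normalization: with the paper's definition \eqref{Mobius isometry} one has $\|\Ub f\|_{\Ac_1}=\tfrac12\|f\|_{\Ac^2(\DD)}$ rather than equality, a constant harmlessly absorbed into $C(R)$.
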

\begin{proof}The result follows from a similar statement concerning Carleson measures for the Bergman space on the disk, see, e.g., \cite{Zhu}, by means of the unitary equivalence \eqref{equivProj}. A reasoning establishing directly this property  can be found, for example, in \cite[Proposition 2.6]{Kang}.
\end{proof}

We note here that the condition \eqref{CarlSuff} can be (although just formally !) relaxed, when replaced by
\begin{equation}\label{CarlSuff1}
|\m|(D(z,R))\le C_\m A(D(z,R_1)),\quad z\in \Pi, \ \ R<1,
\end{equation}
with any fixed $R_1>R$. This remark will be used when considering Carleson measures for derivatives later on.
A measure $\m$ on $\Pi$, having compact support, can be considered as a distribution in $\Ec'(\Pi)$. At the same time, the function $f(z)\overline{g(z)}$ is infinitely differentiable in $\Pi$, $f(z)\overline{g(z)}\in \Ec(\Pi)$, moreover, it is real-analytic in $\Pi$. Thus, this function can be represented as
\begin{equation*}
%\begin{equation}\label{Repr.fg}
    f(z)\overline{g(z)}=\Diag^*(f\otimes \bar{g})=\Diag^*((f\otimes 1)(1\otimes \bar{g})),
%\end{equation}
\end{equation*}
where $\Diag$ is the diagonal embedding of $\Pi$ into $\Pi\times\Pi$, $\Diag(z)=(z,z)$, and $\Diag^{*}$ is the induced mapping
$\Ac_1\otimes\overline{\Ac_1}$ to $\AF(\Pi)$ (the space of real-analytic functions), $\Diag^*(f\otimes\overline{g})=f(z)\overline{g(z)}$. We denote by $\Mcc$ the image of $\Ac_1\otimes\overline{\Ac_1}$ in $\AF(\Pi)$ under the mapping $\Diag^*$.
 Therefore the expression \eqref{Form1} can be understood as
\begin{equation}\label{FormDistr}
   \Fb_{\m}[f,g]=(\m, f(z)\overline{g(z)})=(\m,h), \ \, \text{with} \ \, h=\Diag^*(f\otimes\bar{g})\in\AF(\Pi),
\end{equation}
where parentheses denote the intrinsic paring of $\Ec'(\Pi)$ and $\Ec(\Pi)$. Proposition~\ref{PropCarlMeasure} can be now understood in the sense that as soon as the condition \eqref{CarlSuff} is satisfied, the sesquilinear form \eqref{FormDistr} can be extended to $\Mcc$ for the measure $\m$ not necessarily having compact support.
Moreover, the estimate $|(\m,h)|\le C_\m\|f\|\|g\|$, $h\in\Mcc$, holds. We recall here how such extension of the distribution is standardly performed; this will make  our further considerations for distributions of more general nature more clear.

Let $\m_j$ be a sequence of measures with compact support, each one in a (closed) quasi-hyperbolic disk $D_j\varsubsetneq\Pi$ of radius $r$, and let these disks form a covering of $\Pi$ with finite multiplicity $m$: each point of $\Pi$ is covered by not more than $m$ disks $D_j$. Suppose that for each measure $\m_j$, the estimate $|\Fb_{\m_j}[f,g]|\le C\|f\|\|g\|$ is satisfied, with the same constant $C$. Then, we can sum up such estimates over $j$ and, using the finite multiplicity property, arrive at the same estimate for the measure $\m=\sum\m_j$, automatically locally finite in $\Pi$ (just with a controllably larger constant.) This line of reasoning, considering first the distributions in $\Ec'(\Pi)$, with compact support, obtaining proper estimates, and extending then these estimates to certain distributions without compact support condition, so that the estimates hold on $\Ac_1$, will be further implemented for more general distributions.

 A few words to explain our philosophy. Let $\O$ be an open subset in $\C$ ($\O=\Pi$ in our case). If $F$ is a distribution in $\Ec'(\O)$ i.e., a distribution with compact support in $\O$, its derivative, say, $\partial F$ is standardly defined as the distribution $\partial F$ acting on functions $\f\in \Ec(\O)$ by the rule $(\partial F, \f)=-(F, \partial \f)$. If, however, $F$ is a distribution in a wider space, $F\in  \Dc'(\O)$, the action $(\partial F, \f)$ is not necessarily defined for all $\f\in\Ec(\O)$. In particular, if $\f$ is a nontrivial function in the Bergman space or a function in $\Mcc$, it can never belong to $\Dc(\O)$, so the action of $F$ on such functions and, further on, the definition of derivatives of $F$ needs to be specified anew, however being consistent with the usual definition. In what follows, we consider a class of distributions for which such construction works, preserving the usual properties of distributions. The natural compensation for this frivolity is the narrowing of the
set of functions on which such 'distributions' act.

\section{Carleson measures for derivatives}
Following the pattern in \cite{RV1,RV2}, we introduce now a class of sesquilinear forms involving derivatives of functions $f,g$, corresponding thus to (formal) distributional derivatives of Carleson measures.

\begin{definition} Let $\m$ be a regular fix sign measure on $\Pi$ and $\a,\b$ be two nonnegative integers. We denote by $\Fb_{\a,\b,\m}$ the sesquilinear form
\begin{equation}\label{FormDer}
   \Fb_{\a,\b,\m}[f,g]=(-1)^{\a+\b}\int_{\Pi}\partial^{\a}f(z)\overline{\partial^\b g(z)}d\m(z), \ \ f,g\in\Ac_1,
\end{equation}
which we denote as well by
\begin{equation}\label{FormDer1}
    \Fb_{\a,\b,\m}[f,g]=(\partial^{\a}\bar{\partial}^{\b}\m, f\bar{g}).
\end{equation}

\end{definition}
This definition is consistent with our explained above approach. In fact \eqref{FormDer}, \eqref{FormDer1} act as the \emph{definition} of the action of the 'distribution' $\partial^{\a}\bar{\partial}^{\b}\m$ on elements in $\Ac_1$. Note that this is consistent with the standard distributional definition of the derivative of a measure in the case when $\m$ is a compactly supported measure in $\Pi.$

The first set of properties  of such forms and corresponding operators, similar to the ones for other Bergman spaces, is the following.

\begin{theorem} Let $\m$ be a measure with compact support in $\Pi$ and let  $\a,\b$ be nonnegative integers.
Then \begin{enumerate}
\item For any $f,g\in\Ac_1$, the integral in \eqref{FormDer} converges, moreover,
\begin{equation*}
%\begin{equation}\label{FormDerTransf}
   \Fb[f,g]= \Fb_{\a,\b,\m}[f,g]=(\partial^{\a}\bar{\partial}^{\b}\m, f\bar{g}),
%\end{equation}
\end{equation*}

where the derivatives are understood in the sense of distributions in $\Ec'(\Pi)$ and the parentheses mean the intrinsic paring in $(\Ec'(\Pi), \Ec(\Pi))$.
\item The sesquilinear form \eqref{FormDer} is bounded, considered on $\Ac_1\times \Ac_1$ and therefore defines a bounded Toeplitz operator by $\la \Tb_\Fb f,g\ra=\Fb[f,g]$, or \\ $(\Tb_{\Fb}f)(z)=\Fb[f,\k_z(\cdot)]$, where, as usual, $\k_z(w)=\overline{k(z,w)} = k(w,z)$.
\item The sesquilinear form \eqref{FormDer} determines a compact Toeplitz operator  $\Tb_{\Fb}$ in $\Ac_1 $.
\item If $s_n(\Tb_{\Fb})$ denote the singular numbers of the operator $\Tb_{\Fb}$, then the following estimate holds
\begin{equation}\label{snumbers}
    s_n(\Tb)\le C \exp(-n\s), \ \ \ n \in \mathbb{N},
\end{equation}
where $\s>0$ is a constant determined by the measure $\m$ and integers $\a,\b$.
\end{enumerate}
\end{theorem}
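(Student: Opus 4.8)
The plan is to treat the four claims in sequence, since each builds on the previous one. For claim (1), since $\m$ has compact support $K\subset\Pi$ and the functions $\partial^\a f$, $\partial^\b g$ are holomorphic (hence continuous) on $\Pi$, the integral in \eqref{FormDer} is an integral of a continuous function against a finite measure over the compact set $K$, so convergence is immediate. The identification with the distributional pairing $(\partial^\a\bar\partial^\b\m, f\bar g)$ is just integration by parts in the sense of distributions: by the standard definition, $(\partial^\a\bar\partial^\b\m, f\bar g)=(-1)^{\a+\b}(\m,\partial^\a\bar\partial^\b(f\bar g))$, and since $f$ is holomorphic and $\bar g$ antiholomorphic, the mixed derivative collapses to $\partial^\a f\cdot\overline{\partial^\b g}$, recovering \eqref{FormDer}.

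For claim (2), the idea is to reduce the boundedness of $\Fb_{\a,\b,\m}$ to the Carleson estimate already available in Proposition~\ref{PropCarlMeasure}. First I would choose a pseudo-hyperbolic disk $D=D(z_0,R)$ containing $\supp\m$ with some margin and pick $R_1\in(R,1)$. The key tool is a Cauchy-type estimate adapted to the pseudo-hyperbolic geometry: for a function $f$ holomorphic on $\Pi$ and a point $w\in D(z_0,R)$, the derivative $\partial^\a f(w)$ is controlled by the sup of $|f|$ on the slightly larger disk $D(w,\rho)$, which by the mean-value property is in turn controlled by an $L^2$-average of $f$ over that disk against $dA$. Concretely I would establish a pointwise bound of the form
\begin{equation*}
|\partial^\a f(w)|^2\le C_\a\, y(w)^{-2\a}\,\frac{1}{A(D(w,\rho))}\int_{D(w,\rho)}|f|^2\,dA,
\end{equation*}
where $y(w)=\im w$ and the weight $y(w)^{-2\a}$ reflects the scaling of the disks under \eqref{Disks}. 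Applying Cauchy--Schwarz to \eqref{FormDer} and absorbing the factor $y^{-\a-\b}$ into a new measure $d\tilde\m=y^{-\a-\b}d|\m|$, which is again compactly supported in $\Pi$ and hence trivially satisfies \eqref{CarlSuff} (or its relaxed form \eqref{CarlSuff1}), the boundedness then follows from Proposition~\ref{PropCarlMeasure}. The Toeplitz operator is recovered from the bounded form by the Riesz representation, and its action on $\k_z$ is the stated reproducing formula.

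For claims (3) and (4), which I expect to be the real substance, the plan is to exploit compact support to obtain \emph{exponential} decay of singular numbers. The operator $\Tb_\Fb$ factors as $\Tb_\Fb=B^*C$, where $C:\Ac_1\to L^2(d\tilde\m)$ sends $f\mapsto y^{-\a}\partial^\a f|_{\supp\m}$ (and similarly $B$ for $\b,g$); since $\supp\m$ is a compact subset of $\Pi$ on which holomorphic functions and their derivatives are real-analytic and uniformly smooth, the restriction map is not merely bounded but has exponentially small singular values. The mechanism is the classical one for restriction of holomorphic functions to a compact subdomain: functions in $\Ac_1$ extend holomorphically to a fixed larger domain around $\supp\m$, and by Bernstein--Walsh type estimates the best polynomial approximation on the compact core decays geometrically, giving $s_n(C)\le C\exp(-n\s)$ for some $\s>0$ depending on the conformal distance between $\supp\m$ and $\partial\Pi$ (equivalently on $R$, $R_1$) and on $\a$. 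Compactness (claim (3)) is then an immediate corollary. The main obstacle is making the exponential rate $\s$ precise and uniform: one must track how the holomorphic extension width, the derivative order through the weights $y^{-\a}$, $y^{-\b}$, and the geometry of $\supp\m$ combine, ideally via an explicit $n$-term approximation of the reproducing kernel $\k(z,w)$ on $\supp\m\times\supp\m$ by a finite-rank kernel with exponentially small error, which directly yields \eqref{snumbers} through the Ky Fan inequality.
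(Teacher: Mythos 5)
Your argument is sound and reaches the right conclusions, but it is routed differently from the paper's proof, so a comparison is in order. The paper proves only claim (4), observing that it absorbs (1)--(3); it reduces to positive measures via Ky Fan's inequalities, uses Cauchy-integral estimates to show that $\Fb_{\a,\b,\m}$ is bounded on the Bergman space $\Ac_1(B')$ of a Euclidean disk $B'$ containing $\supp\m$ with a margin, and then factorizes $\Tb_{\Fb}$ as the Toeplitz operator on $\Ac_1(B')$ composed with the restriction operators $\Ac_1(\Pi)\to\Ac_1(B')\to\Ac_1(B)$, quoting Parfenov's theorem for the exponential decay of the singular numbers of the embedding between nested disks. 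You instead factor $\Tb_\Fb=B^*C$ through $L^2(d\tilde\m)$ with $C:f\mapsto y^{-\a}\partial^\a f|_{\supp\m}$, and you generate the exponential decay by hand via Bernstein--Walsh type approximation; the underlying mechanism (exponentially fast approximation of holomorphic functions on compacts strictly inside the domain, governed by capacity) is the same, only the packaging and the source of the key estimate differ. Two caveats on your side. First, Bernstein--Walsh controls best approximation of \emph{individual} functions; to bound $s_n(C)$ you must exhibit a \emph{linear} rank-$n$ scheme with uniformly exponentially small error --- for instance truncation of the Taylor expansion about the center of an enclosing Euclidean disk $B(z_0,r)\subset\Pi$, whose coefficients are controlled by $\|f\|_{L^2(B(z_0,r))}$ and whose tail is geometrically small on the smaller disk (your alternative of approximating the kernel by a finite-rank kernel also works); this is routine but is the one step your sketch leaves open. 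Second, the paper's route buys something quantitative: Parfenov's theorem ties the admissible exponent $\s$ in \eqref{snumbers} to the logarithmic capacity of the M\"obius image of $\supp\m$, which the paper exploits in the remark following the theorem, whereas your construction yields only some unspecified $\s>0$ --- enough for the statement, but not sharp. Conversely, your factorization handles complex measures directly without the reduction to positive ones, and your treatment of (1)--(2) is correct, although for a compactly supported measure the detour through Proposition~\ref{PropCarlMeasure} is heavier than necessary: the pointwise Cauchy estimate plus Fubini already gives boundedness outright.
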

\begin{proof} The property (4) absorbs the other ones, so we will prove only it. Due to Ky Fan's inequalities for singular numbers of compact operators, it is sufficient to establish \eqref{snumbers} for a positive measure $\m$.
Consider a closed Euclidean disk $B\subset \Pi$ with radius $R$ such that for some $r>0$, the support of $\m$ lies strictly inside $B$, thus $\dist(z,\partial B)>r>0$ for all $z\in\supp \m$.
The Cauchy integral formula implies that for any $z\in\supp \m$ and any $\a\in\Z_+$,
\begin{equation*}
%\begin{equation}\label{est in disk}
|\partial^\a f(z)|^2\le C_{\a}\int_{\partial B}|f(\z)|^2dl(\z)
%\end{equation}
\end{equation*}
for each function $f\in\Ac_1$, with constant $C_\a$ depending only on  $\a$, but not depending on $f$ and $z$. By the same reason,  for any $z\in\supp\m$, the estimate
\begin{equation}\label{est in disk2}
 |\partial^\a f(z)|^2\le C_{\a}\int_{|\z-z|\in(R,R+r/2)}|f(\z)|^2dA(\z)
\end{equation}
holds.  By the Cauchy-Schwartz inequality,
\begin{equation*}
%\begin{equation}\label{est in disk3}
    |\Fb_{\a,\b,\m}[f,g]|\le \left(\int_{\supp \m}|\partial^\a f(z)|^2 d|\m|(z)\right)^{\frac12}\left(\int_{\supp \m}|\partial^\b g(z)|^2 d|\m|(z)\right)^{\frac12},
%\end{equation}
\end{equation*}

for all $f,g\in\Ac_1(\Pi)$, and then, due to \eqref{est in disk2},
\begin{equation*}
%\begin{equation}\label{est 4}
  |\Fb_{\a,\b,\m}[f,g]|\le C'_{\a}C'_{\b}|\m|(B)\|f\|_{L^2(B')}\|g\|_{L^2(B')}.
%\end{equation}
\end{equation*}

The last relation means that the sesquilinear form $\Fb_{\a,\b,\m}$ is bounded not only in $\Ac_1(\Pi)$, but in $\Ac_1(B')$ as well, where $B'$ is the Euclidean  disk $B'=B(z,R+r/2)$.
Now we represent the Toeplitz operator $\Tb_\Fb$ as the composition
\begin{equation}\label{est5}
    \Tb_{\Fb}=\Tb_{\Fb_{\Ac_1(B')}}\Ib_{B\Rrightarrow B'}\Ib_{B'\Rrightarrow\Pi},
\end{equation}
where $\Ib_{B\Rrightarrow B'}:\Ac_1(B')\to\Ac_1(B)$, $\Ib_{B'\Rrightarrow \Pi}:\Ac_1(\Pi)\to\Ac_1(B')$ are  operators generated by restrictions of functions defined on a larger set to the corresponding smaller set. The equality \eqref{est5} can be easily checked by writing the sesquilinear forms of operators on the left-hand and on the right-hand side. Finally, the first and the third operators on the right-hand side are bounded, while the middle one, the operator generated by the embedding of the disk $B$ to $B'$, is known to have the exponentially fast decaying sequence of singular numbers see, e.g., \cite{parfenov}.
\end{proof}

\begin{remark} Using the results in \cite{parfenov}, one can give an upper estimate for the constant $\s$ in \eqref{snumbers}. In fact, let $Q\subset\DD$ be the image of $\supp \m$ in the unit disk, under the mapping \eqref{Mobius}. We denote by $\Cb(Q)$ the  set of \emph{connected} closed sets $V\subset \DD$ containing $Q$. For each $V\in\Cb(Q)$, let $\ccap(V)$ denote the logarithmic capacity of $V$ (see the definition, e.g., in \cite{parfenov}) and we set $\cb(\m)$ as
\begin{equation*}
%\begin{equation}\label{ccap}
    \cb(\m)=\inf_{V\in \Cb(Q)}\ccap(V).
%\end{equation}
\end{equation*}

Then  estimate \eqref{snumbers} holds for any $\s<\cb(\m)$.
In fact, for a \emph{positive} measure $\m$, $\a,\,\b=0$, and for a connected set $Q$, the asymptotics of the singular numbers of the Toeplitz operator was found in \cite{parfenov}, and our estimate for the exponent in \eqref{snumbers} follows from this result and the natural monotonicity of singular numbers under the extension of the set where the measure is supported. Our considerations give only the upper estimate for these singular numbers. It is remarkable that for a non-connected support of the measure, even in the setting of \cite{parfenov}, the terms in which the singular numbers asymptotics or even sharp order estimates can be expressed are so far unknown.
\end{remark}
Next, we present our main definition.
\begin{definition}\label{DefK-C} A measure on $\Pi$ is called a Carleson measure for derivatives of order $k$ ($k$-C measure) if for some constant $C_k(\m)>0$,
\begin{equation}\label{kForm}
    |\Fb_{k,\m}[f,f]|\equiv\left|\int_{\Pi}|\partial^k f(z)|^2 d\m(z)\right|\le C_k(\m)\|f\|^2
\end{equation}
for all $f\in \Ac_1$.
\end{definition}

Now, we find a sufficient condition for a measure to be a $k$-C measure. Here it is important to control the dependence of the constant $C_k(\m)$ in \eqref{kForm} on the number $k$.
\begin{theorem}\label{main theorem}
Let $\g\in(0,1)$ be  fixed, and let the measure $\mu$ on $\Pi$ satisfy the condition
\begin{equation}\label{Cond kC}
    |\m|(D(z_0,\g))\le \varpi_\k(\m) |\im z_0|^{2k}A(D(z_0,\g))
\end{equation}
with some $\varpi_\k(\m)$ for all $z_0\in\Pi$.
Then inequality \eqref{kForm} is satisfied, with $C_k(\m)=(k!)^2\varpi_\k(\m)\g^{-2k}$, that is, $\m$ is a $k$-C measure.
\end{theorem}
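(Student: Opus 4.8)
The plan is to reduce the bound \eqref{kForm} to a pointwise derivative estimate for functions in $\Ac_1$, followed by a single application of the geometric condition \eqref{Cond kC}. Since the left-hand side of \eqref{kForm} is dominated by $\int_\Pi|\partial^k f|^2\,d|\m|$ and the variation $|\m|$ satisfies \eqref{Cond kC} with the same $\varpi_k(\m)$, I may assume from the outset that $\m\ge 0$; the interchange of integrals needed below is then justified by Tonelli, so no compact-support hypothesis is required (alternatively one may first treat compactly supported $\m$ and pass to the general case by the finite-multiplicity covering argument described after Proposition~\ref{PropCarlMeasure}).

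First I would establish a pointwise estimate of the form
\[
  |\partial^k f(z_0)|^2\le \frac{(k!)^2}{(\g y_0)^{2k}}\,\frac{1}{A(D(z_0,\g))}\int_{D(z_0,\g)}|f(w)|^2\,dA(w),\qquad z_0=x_0+iy_0,
\]
valid for every $f\in\Ac_1$. Because $\g y_0<y_0$, the Euclidean disk centered at $z_0$ of radius $\g y_0$ lies in $\Pi$, and by the geometry recorded in \eqref{Disks} it lies inside the pseudo-hyperbolic disk $D(z_0,\g)$. Expanding $f$ in a Taylor series about $z_0$ and using $\partial^k(w-z_0)^k=k!$ together with the orthogonality of the monomials $(w-z_0)^n$ over a centered disk isolates the coefficient $\partial^k f(z_0)/k!$; the factor $(k!)^2$ comes from the differentiation and $(\g y_0)^{-2k}$ from the radius. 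Equivalently, one writes the Cauchy integral formula for $\partial^k f(z_0)$ on circles $|w-z_0|=\g y_0$ and applies Cauchy--Schwarz. Tracking this precise $k$-dependence is the whole point of the theorem.

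Next I would integrate the pointwise bound against $d\m$, interchange the order of integration, and use the symmetry of the pseudo-hyperbolic metric, $w\in D(z_0,\g)\iff z_0\in D(w,\g)$, to rewrite the result as $\int_\Pi |f(w)|^2\,K(w)\,dA(w)$, where $K(w)=\int_{D(w,\g)}(\g y_0)^{-2k}A(D(z_0,\g))^{-1}\,d\m(z_0)$. On $D(w,\g)$ the imaginary part $y_0=\im z_0$ is comparable to $\im w$, so condition \eqref{Cond kC}, applied at the center $w$, bounds $K(w)$ by a constant independent of $w$: the weight $|\im z_0|^{2k}$ in \eqref{Cond kC} is exactly what cancels the factor $(\g y_0)^{-2k}$, while the area $A(D(\cdot,\g))$ cancels the normalizing $A(D(\cdot,\g))^{-1}$. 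What remains is $\|f\|^2$ times a constant proportional to $(k!)^2\varpi_k(\m)\g^{-2k}$.

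The main obstacle is pinning the constant down to exactly $(k!)^2\varpi_k(\m)\g^{-2k}$. Several sources of slack must be reconciled: the area form of the derivative estimate produces a stray polynomial factor (of order $k$) that the one-dimensional circle form of Cauchy's formula avoids; the disk $D(z_0,\g)$ is not Euclidean-centered at $z_0$; and the comparability $y_0\asymp\im w$ on $D(w,\g)$ introduces $\g$-dependent geometric factors. I would absorb all of these non-sharp contributions by invoking the relaxed Carleson condition \eqref{CarlSuff1}, enlarging the reference radius from $\g$ to some $R_1>\g$, so that the essential and sharp part of the estimate, namely the combinatorial growth $(k!)^2$ and the power $\g^{-2k}$, is preserved exactly while the purely geometric constants are hidden in $\varpi_k(\m)$ and in a factor $C(R_1)$ depending only on the radius.
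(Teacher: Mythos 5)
Your pointwise-estimate-plus-integration scheme starts the same way as the paper's proof, but the way you assemble the global bound --- Fubini plus the symmetry $w\in D(z_0,\g)\iff z_0\in D(w,\g)$, with the hypothesis \eqref{Cond kC} applied at the point $w$ --- has a genuine gap, and it defeats the very point of the theorem, namely the precise $k$-dependence $C_k(\m)=(k!)^2\varpi_k(\m)\g^{-2k}$. In your kernel $K(w)=\int_{D(w,\g)}(\g y_0)^{-2k}A(D(z_0,\g))^{-1}\,d\m(z_0)$ the weight carries $\im z_0=y_0$ to the power $-2k$, whereas \eqref{Cond kC} applied at $w$ supplies $(\im w)^{2k}$. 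By \eqref{Disks}, on $D(w,\g)$ one only has $\frac{1-\g}{1+\g}\im w\le y_0\le\frac{1+\g}{1-\g}\im w$, so the ``cancellation'' you invoke costs a factor $\left(\frac{1+\g}{1-\g}\right)^{2k}$, exponential in $k$. Such a factor cannot be hidden in a constant ``depending only on the radius,'' and the relaxed condition \eqref{CarlSuff1} is of no help: it merely trades $A(D(z,\g))$ for $A(D(z,R_1))$, a $k$-independent ratio, and does not touch the weight $(\im z)^{2k}$. (For $k=0$ your route is the standard proof of the Carleson embedding, and the comparability loss is harmless; the trouble is specific to $k\ge1$, where it is raised to the power $2k$.) A second, smaller defect: your pointwise estimate is stated without the polynomial factor that the area (Taylor/orthogonality) argument actually produces --- it gives $(k!)^2(k+1)/\bigl(\pi(\g y_0)^{2k+2}\bigr)$, and the factor $k+1$, being unbounded in $k$, also cannot be absorbed into a $k$-independent constant.

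The paper avoids both losses by design, and this is where your proof needs to be repaired. First, the pointwise bound is obtained by integrating the Cauchy formula over an annulus inside the Euclidean disk $B(w_0,s)=D(z_0,\g)$, whose radius is $s=2\g y_0/(1-\g^2)$ by \eqref{Disks}, keeping the kernel bound $|\z-w|\ge s/2$; since $s/2\ge\g y_0$, this yields a clean $(k!)^2(s/2)^{-2(k+1)}\le(k!)^2(\g y_0)^{-2(k+1)}$ with no stray polynomial factor --- the factor $2$ gained in passing from pseudo-hyperbolic to Euclidean radius is exactly what pays for the annulus trick. Second, and crucially, the hypothesis \eqref{Cond kC} is applied at the \emph{same} center $z_0$ that appears in the pointwise bound, so no comparison of $\im z$ across a disk ever occurs at power $2k$; the resulting local inequalities \eqref{repr4} are then summed over a covering of $\Pi$ by the smaller disks, the larger disks having finite multiplicity that depends only on $\g$, not on $k$. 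If you replace your Fubini step by this covering argument and prove the pointwise estimate via the annulus-averaged Cauchy formula, you recover the paper's proof. As written, your argument only yields the $k$-C property with constant of order $(k+1)(k!)^2\varpi_k(\m)\g^{-2k}\left(\frac{1+\g}{1-\g}\right)^{2k}$, which is strictly weaker than the statement and would degrade the applications (e.g.\ Example \ref{Ex5}) in which derivatives of unbounded order appear.
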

%%%%%%%%%%%%%%%%%%%%%%%%%%%%%%
\begin{proof} Given a point $z_0\in\Pi$, the pseudo-hyperbolic disk $D(z_0,\g)$ coincides with the Euclidean  disk $B(w_0,s)$, where their centra and radii are connected by \eqref{Disks}. We write then the standard representation of the derivative at a point $w$, $|w_0-w|<s_1<s$ of an analytic function $f(w)$:
\begin{equation}\label{representation}
f^{(k)}(w)=k!(2\pi\imath)^{-1}\int_{|w_0-\z|=\s}(\z-w)^{-k-1}f(\z)d\z, \ \ s_1<\s<s.
\end{equation}
Now we fix $s_2\in(s_1,s)$ and integrate \eqref{representation}  in $\s$ variable from $s_2$ to $s$, which gives us the estimate
\begin{equation*}%\label{repr1}
|f^{(k)}(w)|\le (s-s_2)^{-1}k!(2\pi)^{-1}\int_{s_2\le |w_0-\z|\le s }|\z-w|^{-k-1}|f(\z)|dA(\z).
\end{equation*}
We choose then $s_1=\frac14 s$, $s_2=\frac34 s$.
The inequality $|\z-w| \geq \frac12 s$,  together with the Cauchy-Schwartz, one yields
\begin{gather*} |f^{(k)}(w)|\le (s/2)^{-k-2}k!\pi^{-1}\int_{|w_0-\z|\le s} |f(\z)|dA(\z) \nonumber\\ \le \frac{2}{\sqrt{\pi}} (s/2)^{-(k+1)}k!\left(\int_{|w_0-\z|\le s} |f(\z)|^2dA(\z)\right)^{\frac12},
\end{gather*}
or
\begin{equation}\label{repr2}
    |f^{(k)}(w)|^2\le (4/\pi) (s/2)^{-2(k+1)}(k!)^2\,\int_{|w_0-\z|\le s} |f(\z)|^2dA(\z).
\end{equation}

The estimate \eqref{repr2} holds for all $w$, $|w-w_0|<s_1=s/4$. Therefore we can integrate it over the Euclidean disk $B(w_0,s_1)$ with respect to the measure $\m$, which gives
\begin{eqnarray}\label{repr3} \nonumber
 &&\left| \int_{B(w_0,s_1)}|f^{(k)}(w)|^2 d\m(w)\right|\\
&\le& |\m|(B(w_0,s_1)) (4/\pi) (s/2)^{-2(k+1)}(k!)^2\,\int_{|w_0-\z|\le s} |f(\z)|^2dA(\z).
\end{eqnarray}
 We substitute now the estimate \eqref{Cond kC} into \eqref{repr3} and use \eqref{Disks} to arrive at
 \begin{eqnarray*}
  &&\left| \int_{B(w_0,s_1)}|f^{(k)}(w)|^2 d\m(w)\right| \\
  && \leq \varpi_k(\m)\, y_0^{2k}\pi (s/2)^2 (4/\pi)(s/2)^{-2(k+1)}(k!)^2\,\int_{|w_0-\z|\le s} |f(\z)|^2dA(\z) \\
  && \leq  \varpi_k(\m)\,  \g^{-2k}(k!)^{2}\int_{|w_0-\z|\le s} |f(\z)|^2dA(\z),
 \end{eqnarray*}
or, returning to the pseudo-hyperbolic disks,
\begin{equation}\label{repr4}
  \left | \int_{D(z_1, \g_1)}|f^{(k)}(w)|^2 d\m(w)\right|\le  \varpi_k(\m)\,  \g^{-2k}(k!)^{2}\int_{D(z_0,\g)} |f(\z)|^2dA(\z),
\end{equation}
where $D(z_1, \g_1) = B(w_0,s_1) \subset B(w_0,s) = D(z_0,\g)$.

Now we follow the reasoning in \cite[Theorem 7.4]{Zhu}, where estimates of the type \eqref{repr4} were summed to obtain the required $k$-Carleson property. One just should replace the hyperbolic disks with pseudo-hyperbolic ones.
Like in \cite{Zhu}, it is possible to find a locally finite covering $\Xi$ of the half-plane $\Pi$ by disks of the type $D(z_1,\g_1)$, with $z_1\in \Pi$, so that the larger  disks $D(z_0,\g)$ form a covering $\widetilde{\Xi}$ of $\Pi$ of finite, moreover, controlled  multiplicity. The latter  means that the number
 $m(\widetilde{\Xi})=\max_{z\in\Pi}\#\{D\in \widetilde{\Xi}:z\in D\}$
is finite.  After adding up all inequalities of the form \eqref{repr4} over all disks $D=D(z_1,\g_1)\in \Xi$, we obtain the required estimate for $\int_{\Pi}|f^{(k)}|^2d\mu$. \end{proof}
%%%%%%%%%%%%%%%%%%%%%%%%%%%%%%%%%%%%%
Having Theorem \ref{main theorem} at our disposal, we introduce the classes of $k$-C measures.
\begin{definition}\label{Def Mk}Fix a number $\g\in(0,1)$. The class $\MF_{k,\g}$ consists of measures $\m$ on $\Pi$ such that
\begin{equation*}
%\begin{equation}\label{MF}
   \varpi_{k,\g}(\m):= \sup_{z\in\Pi}\{|\m|(D(z,\g ))(\im z)^{-2(k+1)}(k!)^2\g^{-2k}\}<\infty.
%\end{equation}
\end{equation*}

\end{definition}
Theorem \ref{main theorem} implies that the class $\MF_{k,\g}$ consists of $k$-C measures. This class, in fact, does not depend on the value of $\g$ chosen, however the value $\varpi_{k,\g}(\m)$ does.

It is convenient to extend the definition of $\MF_{k,\g}$ to half-integer values of $k$:

\begin{definition}\label{DefKCMM}Let $k\in\Z_++\frac12$ be a half-integer. The class $\MF_{k,\g}$, $\g\in(0,1)$ consists of measures satisfying
\begin{equation}\label{Mk12}
   \varpi_{k,\g}(\mu) :=  \sup_{z\in{\Pi}}\left\{ |\m|\left(D(z, \g)\right)(\im{z})^{-2(k+1)}\G(k+1)^2 \g^{-2k}\right\}<\infty.
\end{equation}
\end{definition}
Further on, the parameter $\g$ will be fixed and will be often omitted in our notations. The quantity $\varpi_{k}(\mu)$ will be called the $k$-norm of the measure $\m$.
According to  Definition \ref{DefKCMM}, the spaces $\MF_{k,\g}$ for different $k\in \Z_+/2$ are related by
\begin{equation*}
%\begin{equation}\label{relat k}
    \MF_{k}=(\im{z})^{2(l-k)}\MF_{l},
%\end{equation}
\end{equation*}
with
\begin{equation*}
%\begin{equation}\label{relat kp}
    \varpi_k(\m)=\g^{2(l-k)}(\G(k+1)/\G(l+1))^2\varpi_l(\m).
%\end{equation}
\end{equation*}

Theorem \ref{main theorem} enables us to find   conditions for boundedness of the operators defined by differential sesquilinear forms $\Fb_{\a,\b,\m}$ in \eqref{FormDer}. They look similar to the corresponding conditions for sesquilinear forms in the Bergman space on the unit disk and are derived from Theorem \ref{main theorem} in the same way as Proposition 6.8 is derived from Theorem 6.3 in \cite{RV2}, so we restrict ourselves to formulations only.

\begin{theorem}\label{Th.concrete}
Let the measure $\m$ satisfy \eqref{Mk12} with some $k\in\Z_+/2$. Then for $\a,\b\in \Z_+, \,\a+\b=2k$, the sesquilinear form
\begin{equation*}
%\begin{equation}\label{abform}
    \Fb_{\a,\b,\m}[f,g]=(-1)^{\a+\b}\int_{\Pi}\partial^\a f\overline{\partial^\b g}d\m
%\end{equation}
\end{equation*}
is bounded in $\Ac_1$ and defines a bounded Toeplitz operator $\Tb_{\a,\b,\b}$ in the Bergman space $\Ac_1$, moreover its norm is majorated by $\varpi_{k}(\mu)$.
\end{theorem}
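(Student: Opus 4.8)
The plan is to derive the bound from Theorem~\ref{main theorem} by splitting the form with a \emph{weighted} Cauchy--Schwartz inequality, the whole argument hinging on the balance $\a+\b=2k$. I would introduce the weight $(\im z)^{2(\a-k)}$ together with its reciprocal $(\im z)^{2(\b-k)}$; since $\a+\b=2k$ forces $\a-k=-(\b-k)$, these two weights are mutually reciprocal, so that for $f,g\in\Ac_1$ one may write
\[
   \big|\Fb_{\a,\b,\m}[f,g]\big|
   \le\Big(\int_{\Pi}|\partial^\a f(z)|^2(\im z)^{2(\a-k)}\,d|\m|(z)\Big)^{1/2}
       \Big(\int_{\Pi}|\partial^\b g(z)|^2(\im z)^{2(\b-k)}\,d|\m|(z)\Big)^{1/2},
\]
the product of the two weights being identically $1$, which reproduces $d|\m|$ exactly. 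In this way the single object $\Fb_{\a,\b,\m}$ is controlled by one pure $\a$-derivative integral and one pure $\b$-derivative integral, each taken against a reweighted copy of $\m$.

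The next step is to recognise each reweighted measure as a Carleson measure for derivatives of the appropriate order. By the scaling relation $\MF_{k}=(\im z)^{2(l-k)}\MF_{l}$ recorded after Definition~\ref{DefKCMM}, the hypothesis $\m\in\MF_{k,\g}$ (that is, \eqref{Mk12}) is equivalent to $(\im z)^{2(\a-k)}\m\in\MF_{\a,\g}$ and to $(\im z)^{2(\b-k)}\m\in\MF_{\b,\g}$, with
\[
   \varpi_{\a}\big((\im z)^{2(\a-k)}\m\big)
   \asymp \g^{2(k-\a)}\Big(\tfrac{\Gamma(\a+1)}{\Gamma(k+1)}\Big)^{2}\varpi_{k}(\m),
   \qquad
   \varpi_{\b}\big((\im z)^{2(\b-k)}\m\big)
   \asymp \g^{2(k-\b)}\Big(\tfrac{\Gamma(\b+1)}{\Gamma(k+1)}\Big)^{2}\varpi_{k}(\m),
\]
where the implied constants depend only on $\g$ and come from the fact that $\im w\asymp\im z$ on $D(z,\g)$. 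Theorem~\ref{main theorem}, applied with parameter $\a$ to the first measure and with parameter $\b$ to the second, then turns \eqref{kForm} into
\[
   \int_{\Pi}|\partial^\a f|^2(\im z)^{2(\a-k)}d|\m|\le (\a!)^2\,\g^{-2\a}\,\varpi_{\a}\big((\im z)^{2(\a-k)}\m\big)\,\|f\|^2,
\]
and the analogous inequality for $\partial^\b g$.

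Inserting these two estimates into the weighted Cauchy--Schwartz bound of the first step yields $|\Fb_{\a,\b,\m}[f,g]|\le C\,\varpi_k(\m)\,\|f\|\,\|g\|$, where $C=C(\a,\b,\g)$ collects the factorial and $\g$-power factors produced above. This is precisely the boundedness of the form on $\Ac_1\times\Ac_1$ together with the norm estimate proportional to the $k$-norm $\varpi_k(\m)$. The associated Toeplitz operator is then defined by the standard prescription $\langle\Tb_{\a,\b,\m}f,g\rangle=\Fb_{\a,\b,\m}[f,g]$, and a bounded sesquilinear form determines a bounded operator of the same norm, so that $\|\Tb_{\a,\b,\m}\|\le C\,\varpi_k(\m)$, as claimed.

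The only genuinely load-bearing point is the reciprocity of the two weights, i.e.\ $(\a-k)+(\b-k)=0$: it is exactly this that lets the weighted splitting reproduce $d|\m|$ and that matches the two reweighted measures to the orders $\a$ and $\b$ at which their derivative-Carleson estimates are available through Theorem~\ref{main theorem}. For $\a+\b\neq 2k$ the weights would not multiply to $1$, and no single measure $\m$ would simultaneously control both derivative integrals. The remaining work is bookkeeping of the $\g$- and $\Gamma$-factors through the scaling relation; I would note that the presence of $\Gamma(k+1)^2$ in the definition \eqref{Mk12} of the $k$-norm is what makes the final constant cleanly proportional to $\varpi_k(\m)$, reducing in the balanced case $\a=\b=k$ to the constant $C=(k!)^2\g^{-2k}$ of Theorem~\ref{main theorem} (up to the covering multiplicity already absorbed there).
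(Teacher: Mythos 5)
Your proposal is correct and is essentially the paper's own argument: the paper gives no details, deferring to the derivation of Proposition 6.8 from Theorem 6.3 in \cite{RV2}, and that derivation is exactly your weighted Cauchy--Schwartz splitting with the mutually reciprocal weights $(\im z)^{2(\a-k)}$, $(\im z)^{2(\b-k)}$ (possible precisely because $\a+\b=2k$), followed by the scaling relation $\MF_{k}=(\im z)^{2(l-k)}\MF_{l}$ and an application of Theorem \ref{main theorem} at orders $\a$ and $\b$. Your bookkeeping of the $\g$- and $\Gamma$-factors (with constants depending only on $\g$ via $\im w\asymp\im z$ on $D(z,\g)$) is also consistent with the paper's normalization of $\varpi_{k,\g}$.
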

Taking into account our above agreement concerning distributional derivatives of measures without compact support condition, Theorem \ref{Th.concrete} can be reformulated as
\begin{theorem}\label{Th.concrete.der} Under the conditions of Theorem  \ref{Th.concrete}, the sesquilinear form
\begin{equation*}
%\begin{equation}\label{abform.der}
  \Fb_{\partial^\a\overline{\partial^\b}\m}[f,g]=(\partial^\a\overline{\partial^\b}\m, f\bar{g})
%\end{equation}
\end{equation*}
is bounded in $\Ac_1$ and defines a bounded Toeplitz operator $\Tb_{\partial^\a\overline{\partial^\b}\m}$ in $\Ac_1$.
\end{theorem}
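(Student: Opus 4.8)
The plan is to recognize that Theorem \ref{Th.concrete.der} is not an independent assertion but the distributional restatement of Theorem \ref{Th.concrete}: the two sesquilinear forms appearing in them are literally the same object on $\Ac_1\times\Ac_1$. Once this identification is in place, the boundedness and the existence of the associated Toeplitz operator are immediate from Theorem \ref{Th.concrete}, together with the usual identification of a bounded sesquilinear form with a bounded operator via $\la\Tb_{\partial^\a\overline{\partial^\b}\m}f,g\ra=\Fb_{\partial^\a\overline{\partial^\b}\m}[f,g]$.

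First I would invoke the convention fixed after \eqref{FormDer1}: for a general (not necessarily compactly supported) measure $\m$ subject to \eqref{Mk12}, the pairing $(\partial^\a\overline{\partial^\b}\m,f\bar g)$ is \emph{defined} to equal $\Fb_{\a,\b,\m}[f,g]$. With this reading there is nothing left to prove beyond quoting Theorem \ref{Th.concrete}, which already gives $|\Fb_{\a,\b,\m}[f,g]|\le\varpi_k(\m)\|f\|\,\|g\|$ for $\a+\b=2k$.

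The one point genuinely requiring verification is that this convention is consistent with the honest distributional derivative, so that the reformulation is legitimate. I would check this on compactly supported $\m\in\Ec'(\Pi)$, where $\partial^\a\overline{\partial^\b}\m$ is a bona fide distribution and $f\bar g\in\Mcc\subset\Ec(\Pi)$, so the pairing is classical:
\begin{equation*}
(\partial^\a\overline{\partial^\b}\m,f\bar g)=(-1)^{\a+\b}(\m,\partial^\a\overline{\partial^\b}(f\bar g)).
\end{equation*}
The key observation is the collapse of the Leibniz rule: since $f$ is holomorphic ($\bar\partial f=0$) and $\bar g$ is anti-holomorphic ($\partial\bar g=0$), every $\bar\partial$ must fall on $\bar g$ and every $\partial$ on $f$, giving $\partial^\a\overline{\partial^\b}(f\bar g)=\partial^\a f\cdot\overline{\partial^\b g}$. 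Substituting this back recovers $\Fb_{\a,\b,\m}[f,g]$ exactly, confirming that the distributional notation of \eqref{FormDer1} agrees with the integral form \eqref{FormDer} on the compactly supported case. The passage to arbitrary $\m$ satisfying \eqref{Mk12} is then exactly the extension-by-covering mechanism already described in the paper's philosophy discussion, which is precisely what the convention encodes.

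The main obstacle here is essentially bookkeeping rather than analysis: there is no new estimate to establish, only the need to make the identification of the two forms airtight and to confirm that the collapse of the Leibniz rule, valid pointwise in the compactly supported case, is what the general-measure convention is built to preserve. Once that identification is recorded, boundedness of $\Fb_{\partial^\a\overline{\partial^\b}\m}$ and hence of $\Tb_{\partial^\a\overline{\partial^\b}\m}$ transfers verbatim from Theorem \ref{Th.concrete}.
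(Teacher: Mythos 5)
Your proposal is correct and follows exactly the paper's route: the paper gives no separate proof, stating only that Theorem \ref{Th.concrete.der} is Theorem \ref{Th.concrete} ``reformulated'' via the agreement that $(\partial^\a\overline{\partial^\b}\m, f\bar g)$ is \emph{defined} by the form \eqref{FormDer}, which is precisely your identification. Your added consistency check (the Leibniz collapse $\partial^\a\overline{\partial^\b}(f\bar g)=\partial^\a f\cdot\overline{\partial^\b g}$ for compactly supported $\m$) is also just the verification the paper alludes to when it says the convention is consistent with the standard distributional definition.
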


As usual for Toeplitz type operators, boundedness conditions lead to compactness conditions, formulated in similar terms.
\begin{theorem}\label{Th.Comp} For $R>0$, denote by $Q_R$ the rectangle in $\Pi$:
\begin{equation*}
    Q_R=\{z=x+y\in\Pi: x\in(-R,R), y\in(R^{-1}, R)\}.
\end{equation*}
Suppose that $\a+\b=2k$ and
\begin{equation}\label{Mk12c}
   \lim_{R\to \infty}  \sup_{z\in{\Pi\setminus Q_R}}\left\{ |\m|\left(D(z, \g)\right)(\im{z})^{-2(k+1)}\G(k+1)^2 \g^{-2k}\right\}=0.
\end{equation}
Then the operator $\Tb_{\partial^\a\overline{\partial^\b}\m}$ is compact in $\Ac_1$.
\end{theorem}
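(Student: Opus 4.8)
The plan is to deduce the compactness of $\Tb_{\partial^\a\overline{\partial^\b}\m}$ from two facts already at hand: a measure with compact support generates a compact operator (item (3) of the theorem on compactly supported measures above, which holds for complex measures via the Ky Fan reduction used there), while Theorem \ref{Th.concrete} bounds the operator norm by the $k$-norm $\varpi_k$ of the generating measure. Observe first that \eqref{Mk12c} forces the full supremum \eqref{Mk12} to be finite: on the relatively compact $Q_R$ the quantity in braces is bounded, since $\m$ is finite on compacta and $\im z$ is bounded below there, while on $\Pi\setminus Q_R$ it is bounded by \eqref{Mk12c}. Thus $\m\in\MF_{k,\g}$ and $\Tb_{\partial^\a\overline{\partial^\b}\m}$ is a well-defined bounded operator. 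The strategy is then classical: split off the part of $\m$ living on a large compact set, which yields a compact operator, and show that the remainder yields an operator of arbitrarily small norm.

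Concretely, for $R>0$ I would write $\m=\m_R'+\m_R''$ with $\m_R'=\m|_{\overline{Q_R}}$ and $\m_R''=\m|_{\Pi\setminus\overline{Q_R}}$. The closure $\overline{Q_R}$ is a compact subset of $\Pi$, since its points have imaginary part in $[R^{-1},R]$ and so stay away both from the real axis and from infinity; hence $\m_R'$ is compactly supported and $\Tb_{\partial^\a\overline{\partial^\b}\m_R'}$ is compact. Because the form \eqref{FormDer} is additive in $\m$, we have $\Tb_{\partial^\a\overline{\partial^\b}\m}=\Tb_{\partial^\a\overline{\partial^\b}\m_R'}+\Tb_{\partial^\a\overline{\partial^\b}\m_R''}$, and Theorem \ref{Th.concrete} gives $\|\Tb_{\partial^\a\overline{\partial^\b}\m_R''}\|\le\varpi_k(\m_R'')$. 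It therefore remains to prove $\varpi_k(\m_R'')\to0$ as $R\to\infty$.

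This last point is the main technical step, and it rests on a geometric comparison between pseudo-hyperbolic disks and the rectangles $Q_R$. Using \eqref{Disks}, every $\zeta\in D(z,\g)$ satisfies
\[
\tfrac{1-\g}{1+\g}\,\im z\le\im\zeta\le\tfrac{1+\g}{1-\g}\,\im z,\qquad |\re\zeta-\re z|\le\tfrac{2\g}{1-\g^2}\,\im z .
\]
These fixed distortions show that there is a constant $c=c(\g)\in(0,1)$ such that $z\in Q_{cR}$ implies $D(z,\g)\subset\overline{Q_R}$; equivalently, if $D(z,\g)$ meets $\Pi\setminus\overline{Q_R}$ then $z\in\Pi\setminus Q_{cR}$. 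Consequently $|\m_R''|(D(z,\g))=0$ unless $z\in\Pi\setminus Q_{cR}$, while for such $z$ one has $|\m_R''|(D(z,\g))\le|\m|(D(z,\g))$. Multiplying by $(\im z)^{-2(k+1)}\G(k+1)^2\g^{-2k}$ and taking the supremum over $z\in\Pi$ yields
\[
\varpi_k(\m_R'')\le\sup_{z\in\Pi\setminus Q_{cR}}\left\{|\m|\left(D(z,\g)\right)(\im z)^{-2(k+1)}\G(k+1)^2\g^{-2k}\right\}.
\]

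Since $cR\to\infty$ as $R\to\infty$, the right-hand side tends to $0$ by hypothesis \eqref{Mk12c}, so $\varpi_k(\m_R'')\to0$. Hence $\|\Tb_{\partial^\a\overline{\partial^\b}\m}-\Tb_{\partial^\a\overline{\partial^\b}\m_R'}\|=\|\Tb_{\partial^\a\overline{\partial^\b}\m_R''}\|\to0$, exhibiting $\Tb_{\partial^\a\overline{\partial^\b}\m}$ as the operator-norm limit of the compact operators $\Tb_{\partial^\a\overline{\partial^\b}\m_R'}$, and therefore compact. The only delicate point is the determination of the constant $c(\g)$: I expect the main obstacle to be checking that the multiplicative distortion of $\im z$ and the additive distortion of $\re z$ recorded above can be simultaneously absorbed by replacing $R$ with $cR$, uniformly in $z\in\Pi$, so that the vanishing in \eqref{Mk12c} genuinely transfers from $\Pi\setminus Q_{cR}$ to the tail measure $\m_R''$.
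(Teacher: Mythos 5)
Your proposal is correct and follows essentially the same route as the paper's proof: split $\m$ into the compactly supported piece on $\overline{Q_R}$, whose operator is compact by the theorem on compactly supported measures, plus the tail piece, whose operator norm is controlled through Theorem \ref{Th.concrete.der} by the $k$-norm and tends to zero, so that $\Tb_{\partial^\a\overline{\partial^\b}\m}$ is a norm limit of compact operators. The pseudo-hyperbolic distortion estimate you work out (the constant $c(\g)$ guaranteeing $D(z,\g)\subset\overline{Q_R}$ for $z\in Q_{cR}$) is precisely the detail the paper compresses into the phrase ``has small norm, as soon as $R$ is chosen sufficiently large''.
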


\begin{proof}
 It goes in a standard way. Split the measure $\m$ into two parts, $\m=\m_R+\m_R'$, where $\m_R$ has support outside $Q_R$ and $\m_R'$ has compact support.  Correspondingly, the operator $\Tb_{\partial^\a\overline{\partial^\b}\m}$ splits into two terms, $\Tb_R+\Tb_R'$. The first operator, by Theorem \ref{Th.concrete.der}, has small norm, as soon as $R$ is chosen sufficiently large. The operator $\Tb_R'$ is compact by Theorem 3.1. Therefore, $\Tb_{\partial^\a\overline{\partial^\b}\m}$ is compact.
\end{proof}

\section{Examples}We give some examples of symbols--distributions and --hyperfunctions. More examples can be constructed, following the pattern seen in \cite{RV1}, \cite{RV2}.
\begin{example}\label{Ex1}Let the measure $\m$ be supported on the lattice $\LL=\Z+ i \N$. With an integer point $\nb=(n_1+in_2)$ we assign the weight $\mb_{\nb}>0$. Suppose that $\sup_{\nb}\mb_{\nb}<\infty$. Then the Toeplitz operator, with the measure $\m=\sum_{\nb}\mb_{\nb}\d(z-\nb)$ as symbol, is bounded.
\end{example}
\begin{example}\label{Ex2}In the setting of Example 4.1, consider the Toeplitz operator with distributional symbol $\m_{\a,\b,W}=W(\nb)\partial^\a\overline{\partial^\b}\m$, where $W(\nb)$ is a weight function, $W(n_1+in_2)=|n_2|^{-\a-\b}$. Then the conditions of Theorem \ref{Th.concrete.der} are satisfied for the measure $W(\nb)\m$ and, therefore, the Toeplitz operator with symbol $\m_{\a,\b,W}$ is bounded. If $W(\nb)$ is a function on the lattice satisfying $W(\nb)\to 0$ as $|\nb|\to \infty$ then, by Theorem \ref{Th.Comp}, this Toeplitz operator is compact.
\end{example}\label{Ex3}
\begin{example}\label{ex4.3}
In the setting of Example 4.1, consider the, initially formal, sum
\begin{equation*}
%\begin{equation}\label{Ex4.3.1}
    a=\sum_{\nb\in\LL} W(\nb)\partial^{\a_\nb}\overline{\partial^{\b_{\nb}}}\m(\{\nb\}),
%\end{equation}
\end{equation*}
where $(\a_{\nb}, \b_{\nb})$ is a collection of orders of differentiation and $W(\nb)$ is a weight sequence. This  is a sum of distributions supported at single points of the lattice $\LL$. To each of them, we can apply Theorem \ref{Th.concrete.der} and obtain an estimate of the norm of the corresponding sesquilinear form $\Fb_{\nb}$, where the order $k_{\nb}=\a_{\nb}+\b_{\nb}$ is involved:
\begin{eqnarray}\label{ex3.est} \nonumber
    |\Fb_{\nb}[f,g]|&\le& C |W(\nb)|((\a_{\nb}!\b_{\nb}!)(\g/2)^{-(\a_{\nb}+\b_{\nb})}n_2^{-\a_{\nb}+\b_{\nb}}\|f\|\|g\| \\
    &=&C |W(\nb)|\t(\nb)\|f\|\|g\|.
\end{eqnarray}
A rough way to estimate the sesquilinear form $\Fb_a$ would be to consider the sum of the terms in \eqref{ex3.est},
\begin{equation}\label{ex3.est2}
|\Fb_a[f,g]|\le \sum(|W(\nb)|\t(\nb))\|f\|\|g\|,
\end{equation}
so the sesquilinear form is bounded as soon as the series in \eqref{ex3.est2} converges. A more exact treatment uses the finite multiplicity covering by disks containing no more than, say, 10 points of the lattice $\LL$ similarly to how this was done in Section 2. In this way, the sesquilinear form is majorated by a smaller quantity,
\begin{equation*}
    |\Fb_a[f,g]|\le \sup\{(|W(\nb)|\t(\nb))\|f\|\|g\|\},
\end{equation*}
for which the finiteness condition requires a considerably milder decay requirement for $W(\nb)$ than the finiteness of the coefficient in \eqref{ex3.est2}.
\end{example} Now we consider some symbols with support touching the boundary of the upper half-plane $\Pi$.
\begin{example}\label{Ex4}  Let $\Lc_j\subset\Pi$ be the straight line $\{z= x+iy: x \in \mathbb{R}, \ y=2^{-j}\}$, and $\m_j$
be the measure $W(j)\d(\Lc_j)$ with some weight sequence $W(j)$. In other words, it is the Lebesgue measure on the line $\Lc_j$ with the weight factor $W(j)$. The sum $\m=\sum_j\m_j$ is a measure on $\Pi$, which, however, does not have compact support in $\Pi$. By Definition \ref{Def Mk}, the measure $\m$ belongs to the class $\MF_{0,\g}$ (say, for $\g=\frac12$), as soon as $W(j)=O(2^{-2j})$.
\end{example}
\begin{example}\label{Ex5} For the same system of straight lines $\Lc_j$ we consider
\begin{equation}\label{Ex5.eq1}
    \varsigma=\sum_j W(j)\varsigma_j\equiv \sum_jW(j) \partial^j\d(\Lc_j)=\sum_j (i/2)^jW(j)(1\otimes \partial^j_y\d(y-2^{-j})).
\end{equation}
In \eqref{Ex5.eq1}, $\varsigma$ is a formal sum of distributions $W(j)\varsigma_j$, each being a derivative of a measure, of unbounded orders, and this formal sum corresponds to the sesquilinear form
\begin{equation}\label{Ex5.eq2}
    \Fb_\varsigma[f,g]=\sum_j\Fb_{\varsigma_j}[f,g]=\sum_j(-1)^jW(j)\int_{\Lc_j}\partial^jf \cdot\bar{g}dx.
\end{equation}
The sequence of weights $W(j)$ should be chosen in such a way that the sum \eqref{Ex5.eq2} converges for $f,g\in \Ac_1$ and, moreover, is a bounded sesquilinear form on $\Ac_1$. By Theorem \ref{Th.concrete}, the measure $\m_j=1\otimes \d(y-2^{-j})$ belongs to the class $\MF_{j,\g}$ with estimate
\begin{equation*}
%\begin{equation}\label{Ex5.eq3}
    \varpi_{j,\g}(\m_j)\le C (2/\g)^{2j} (j!)^2.
%\end{equation}
\end{equation*}

Thus, if the sum $\sum_j W(j) (2/\g)^{2j} (j!)^2$ is finite, the sesquilinear form \eqref{Ex5.eq2} is bounded on $\Ac_1$.
\end{example}

\section{The structure of the Bergman spaces.}
Along with the Bergman space $\Ac_1$ of analytic functions on $\Pi$, we consider spaces of polyanalytic functions. We denote by $\Ac_j, \, j=1,2,\dots$ the space of square integrable functions on $\Pi$ satisfying the iterated Cauchy-Riemann equation $\overline{\partial}^jf=0$ (the reader was, probably, intrigued by the subscript in the notation $\Ac_1$ -- now its use is justified). Of course, $\Ac_j\subset\Ac_{j'}$ for $j<j'$, so, to get rid of these 'less polyanalytic' functions, the \emph{true polyanalytic} Bergman spaces have been introduced (see \cite{V2}), by
\begin{equation*}
    \Ac_{(j)}=\Ac_j\ominus\Ac_{j-1}= \Ac_j \cap \Ac_{j-1}^{\perp},\ \ j=2,\dots;\ \  \Ac_{(1)}=\Ac_{1}.
\end{equation*}
Worth mentioning is the following direct sum decomposition of $L_2(\Pi)$:
\begin{equation*}
 L_2(\Pi) = \bigoplus_{n \in \mathbb{N}}  \Ac_{(j)} \oplus \bigoplus_{n \in \mathbb{N}}  \widetilde{\Ac}_{(j)},
\end{equation*}
where $\widetilde{\Ac}_{(j)}$ are true poly-antianalytic Bergman spaces (see, for details, \cite{V2}).

 For these poly-Bergman spaces on the upper half-plane, there exists a system of creation and annihilation operators, described in \cite{KarlPessIEOT,V3}. These operators are two-dimensional singular integral operators,
\begin{equation*} %\label{SP}
(\Sbb_\P u)(w)=-\frac1\pi \int_{\P}\frac{u(z)dA(z)}{(z-w)^2} \qquad \text{and} \qquad (\Sbb_\P^*u)(w)=-\frac1\pi \int_{\P}\frac{u(z)dA(z)}{(\bar{z}-\bar{w})^2}.
\end{equation*}
Being understood in the principal value sense, they are bounded in $L^2(\Pi)$ and adjoint to each other. They are, in fact,  the Beurling--Ahlfors operators compressed to the half-plane, and  are surjective isometries,
\begin{eqnarray}\label{SBP}
  \Sbb_\P:  \Ac_{(j)}\to \Ac_{(j+1)}, && \Sbb_\P^*:  {\Ac}_{(j)}\to {\Ac}_{(j-1)}, \quad j>1, \\ \nonumber
  \Sbb_\P^*:  \widetilde{\Ac}_{(j)}\to \widetilde{\Ac}_{(j+1)}, && \Sbb_\P:  \widetilde{\Ac}_{(j)}\to \widetilde{\Ac}_{(j-1)}, \quad j>1,
\end{eqnarray}
while
\begin{equation*}%\label{S0P}
    \Sbb_\P^*: \Ac_1\to \{0\}, \quad \Sbb_\P: \widetilde{\Ac_1}\to \{0\}.
\end{equation*}

Thus, in particular, we have surjective isometries
\begin{equation*}%\label{StruP}
  (\Sbb_\P)^j\Ac_{(1)}=\Ac_{(j+1)}(\P).
\end{equation*}

Formulas \eqref{SBP},  possessing the structure similar to the ones of the Landau subspaces for the Schr\"{o}dinger equation with uniform magnetic field,  justify calling  $\Sbb_\P,\, \Sbb^*_\P$ creation and annihilation operators.

\begin{remark} Here one can notice a certain discrepancy in notations: $\Sbb$ denotes usually the \emph{annihilation} operator in the poly-Fock spaces while $\Sbb_\P$ denotes here the \emph{creation} operator in the poly-Bergman spaces -- however, this is the tradition and we do not want to break it.
\end{remark}

The operators $\Sbb_\P^j$, restricted to $\Ac_1$, admit a representation, found in \cite{PessoaPW}, which is much more convenient for using in further reductions.
\begin{theorem}[{\cite[Theorem 3.3]{PessoaPW}}] For $u\in\Ac_1$,
%\begin{equation*}
\begin{equation}\label{TransP}
   ( \Sbb_\P^j u)(z)=\frac{{\partial}^{j}[(z-\bar{z})^{j}u(z)]}{j!},  j\ge0.
\end{equation}
%\end{equation*}

\end{theorem}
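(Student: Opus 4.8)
The plan is to prove the formula by induction on $j$, reducing it to the single creation step
\[
\Sbb_\P v_j = v_{j+1}, \qquad v_j(z):=\frac{1}{j!}\,\partial^{j}\bigl[(z-\bar z)^{j}u(z)\bigr],
\]
the base case $j=0$ being the tautology $\Sbb_\P^{0}u=u=v_0$. The engine of the step is a factorization of the creation operator through the half-plane Cauchy transform. Set $\Cc_\Pi f(w)=\frac1\pi\int_\Pi \frac{f(z)}{w-z}\,dA(z)$. Since $\bar\partial_w\frac{1}{\pi(w-z)}=\delta(w-z)$ and $\partial_w\frac{1}{w-z}=-\frac{1}{(w-z)^2}=-\frac{1}{(z-w)^2}$, one obtains for $w\in\Pi$ the two pointwise identities $\bar\partial_w\Cc_\Pi f=f$ and $\partial_w\Cc_\Pi f=\Sbb_\P f$; that is, $\Sbb_\P=\partial_w\circ\Cc_\Pi$ on $\Pi$.

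The next ingredient is purely algebraic. For analytic $u$ one has $\bar\partial\bigl[(z-\bar z)^{m}u\bigr]=-m\,(z-\bar z)^{m-1}u$, because $\bar\partial(z-\bar z)=-1$ and $\bar\partial u=0$. Introducing the primitive $H:=\frac{1}{(j+1)!}\,\partial^{j}\bigl[(z-\bar z)^{j+1}u\bigr]$ and using that $\partial$ and $\bar\partial$ commute, this gives at once $v_j=-\bar\partial H$ and $v_{j+1}=\partial H$. Thus both the $j$-th candidate and the $(j{+}1)$-st candidate are first-order $\partial$/$\bar\partial$ images of the same $H$.

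Now I would apply $\Sbb_\P=\partial_w\Cc_\Pi$ to $v_j=-\bar\partial H$ and invoke the Cauchy--Pompeiu formula on $\Pi$, which yields $\Cc_\Pi\bar\partial H=H-B[H]$, where $B[H](w)=\frac{1}{2\pi i}\int_{\partial\Pi}\frac{H(\zeta)}{\zeta-w}\,d\zeta$ is the Cauchy integral of the boundary trace of $H$. The crucial observation is that this boundary term vanishes: expanding $H$ by the Leibniz rule, every summand retains at least one factor $(z-\bar z)$, which is $0$ on $\partial\Pi=\R$, so $H|_{\R}\equiv 0$ and hence $B[H]\equiv 0$. Consequently $\Cc_\Pi v_j=-H$ and $\Sbb_\P v_j=-\partial_w H$; since $\partial_w H=v_{j+1}$, this identifies $\Sbb_\P v_j$ with $v_{j+1}$ up to the overall sign fixed by the chosen conventions for $\Sbb_\P$ and for the factor $(z-\bar z)^{j}$ (replacing $(z-\bar z)$ by $(\bar z-z)$ absorbs it), closing the induction.

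The main obstacle is analytic justification rather than algebra. The operator $\Sbb_\P$ is a principal-value singular integral, and $\Cc_\Pi$ applied to a merely $L^2$ function need not converge absolutely; moreover $\Pi$ is unbounded, so the Cauchy--Pompeiu formula requires controlling the contribution of the arc at infinity, which is delicate because of the growing factor $(z-\bar z)^{j+1}$ inside $H$. I would therefore first establish the step on a dense subset of $\Ac_1$ of functions with sufficient decay (for instance finite combinations of reproducing kernels $\k(\cdot,w_0)$), where the pointwise estimate $|u(z)|\lesssim \|u\|/\im z$ together with its analogues for $\partial^{m}u$ makes the arc at infinity drop out in the limit, and then pass to the general case by continuity, using that $\Sbb_\P$ is bounded (indeed isometric) and that $u\mapsto v_{j+1}$ is continuous into $\Ac_{(j+1)}$ on this dense set. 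An alternative that sidesteps part of the boundary analysis is to verify instead the companion identity $\Sbb_\P^{*}v_{j+1}=v_j$ by the symmetric computation and to combine it with the facts $v_{j+1}\in\Ac_{(j+1)}=\Ran\Sbb_\P$ and $\Sbb_\P\Sbb_\P^{*}=I$ on $\Ac_{(j+1)}$.
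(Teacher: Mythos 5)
The paper contains no proof of this statement at all: it is imported, with attribution, from \cite[Theorem 3.3]{PessoaPW}, so there is no internal argument to compare yours with, and supplying one is genuinely added content. Your skeleton is the right one, and is essentially the standard way such formulas are obtained: the factorization $\Sbb_\Pi=\partial\circ\Cc_\Pi$ of the compressed Beurling transform through the Cauchy transform, the primitive $H=\frac{1}{(j+1)!}\partial^{j}\bigl[(z-\bar z)^{j+1}u\bigr]$ with $v_j=-\bar\partial H$ and $v_{j+1}=\partial H$, the observation that $H|_{\R}=0$ kills the boundary term in Cauchy--Pompeiu, induction on $j$, and a density argument for general $u\in\Ac_1$.

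The one substantive defect is the sign, which you wave off as a matter of convention; it cannot be left there, because the statement comes with conventions already fixed by the paper's definition of $\Sbb_\Pi$ (the kernel $-\pi^{-1}(z-w)^{-2}$ displayed before \eqref{SBP}). Your computation gives $\Sbb_\Pi v_j=-v_{j+1}$, hence $\Sbb_\Pi^{j}u=(-1)^{j}v_j$, and that computation is in fact correct: the whole-plane operator with this kernel is the classical Beurling transform $T=\partial\circ C$, which satisfies $T\bar\partial g=\partial g$ for $g\in W^{1,2}(\C)$; applying this to the zero extension of $(z-\bar z)u$ across $\R$ yields $\Sbb_\Pi u=-\partial\bigl[(z-\bar z)u\bigr]$ for nice $u\in\Ac_1$. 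So what your argument proves is $\Sbb_\Pi^{j}u=\frac{1}{j!}\partial^{j}\bigl[(\bar z-z)^{j}u\bigr]$, which differs from \eqref{TransP} by $(-1)^{j}$. In other words, your proof exposes a sign mismatch internal to the paper: \eqref{TransP} as printed is consistent with the kernel $+\pi^{-1}(z-w)^{-2}$, not with the one the paper defines. A proof of the identity must pin this down rather than leave it indeterminate: establish $T\bar\partial=\partial$ once (via the Fourier symbol $\bar\xi/\xi$, or via $C\bar\partial g=g$ plus Liouville) and state the conclusion with the sign it forces. Happily, the discrepancy is immaterial for how the theorem is used later: in Proposition~\ref{PropTranshalfplane} and Theorem~\ref{ThEquPi} the functions $\Sbb_\Pi^{j}f$ enter sesquilinear forms quadratically, so the factor $(-1)^{j}$, like the factor $(2i)^{j}$ relating $(z-\bar z)^{j}$ to $y^{j}$, only changes the unitary equivalence by a numerical factor.

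Two lesser remarks. First, the zero-extension route just mentioned is cleaner than Cauchy--Pompeiu on the unbounded domain $\Pi$: extending $H$ by zero across $\R$ (legitimate precisely because $H|_{\R}=0$) and invoking $T\bar\partial=\partial$ on $W^{1,2}(\C)$ removes the arc-at-infinity estimates entirely; on your dense class of combinations of reproducing kernels all required decay is available, and the passage to general $u\in\Ac_1$ works because convergence in $\Ac_1$ implies local uniform convergence of all derivatives, so $v_j(u_n)\to v_j(u)$ in $\Dc'(\Pi)$ while $\Sbb_\Pi^{j}u_n\to\Sbb_\Pi^{j}u$ in $L^2(\Pi)$. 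Second, your fallback argument is circular as stated: it invokes membership of $v_{j+1}$ in the true poly-Bergman space (which, note the index, should be $\Ac_{(j+2)}$, not $\Ac_{(j+1)}$), and that membership is essentially the assertion being proved; it would need an independent proof before $\Sbb_\Pi\Sbb_\Pi^{*}=I$ on that subspace can be applied.
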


Note that the isometry  $\Ub$ of the Bergman spaces on the upper half-plane $\Pi$ and on the disk $\mathbb{D}$ is not carried over to the poly-analytic Bergman spaces.

\section{Relations among the Toeplitz operators in the true poly-Bergman spaces.}

Let $a$ be a distribution on $\P$, defined at least on $C^{\infty}(\bar{\Pi})\cap L^1(\Pi).$  We consider the sesquilinear form
\begin{equation}\label{PFform}
 \Fb_a[u,v] =  ( a u,\bar{v})=(a,u\bar{v}), \quad u= \Sbb_\Pi^j f\in\Ac_{(j+1)}, \quad v=\Sbb_\P^j g\in\Ac_{(j+1)},
\end{equation}
with $f,g$ being elements of the standard orthonormal basis in $\Ac_{(1)}$. The sesquilinear form \eqref{PFform} is defined for $f,g$ in the basis in $\Ac_{(1)}$ and can be extended by sesquiliearity to the linear span of the basis. If it turns out that \eqref{PFform} is bounded on this span, it can be extended by continuity to the whole $\Ac_{(1)}$ and thus it would define a bounded operator in $\Ac_{(1)}$. On the other hand, since $\Sbb_\Pi^j$ is a unitary operator,  $\Fb_a[u,v]$ can be therefore extended to a bounded sesquilinear form defined for $u,v$ being arbitrary elements in $\Ac_{(j+1)}$, thus defining a bounded operator in $\Ac_{(j+1)}$. The present section is devoted to finding an explicit relation between these two operators.
Further on, in this section we will only use the Hilbert space $L^2(\Pi,dA)$ and therefore we will suppress the notation of the space in the scalar product $\la .,.\ra;$ the parentheses $(.,.)$ still denote the action of a distribution on a smooth function, without the complex conjugation.  Since $u,v$ are elements in the poly-Bergman space, the product $u\bar{v}$ is a smooth function in $L^1(\Pi)$.

The following result leads to establishing a relation between Toeplitz operators in the true  poly-Bergman space $\Ac_{(j)}$ and the Bergman space $\Ac_1$.

\begin{prop}\label{PropTranshalfplane}
Let $a$ be a distribution in the half-plane $\Pi$. Then
\begin{equation}\label{transPi}
   \Fb_a[u,v] =\Fb_a[\Sbb_\P^j f,\Sbb_\P^j g ]  =( a,\Sbb_\P^j f\overline{\Sbb_\P^j g})=\langle (\Kb_{(j)} a) f,g\rangle,
\end{equation}
with $\Sbb_\P^j$ defined in \eqref{TransP}, where $\Kb_{(j)}$ is a differential operator of order $2j$ having the form
\begin{equation}\label{TRanshalfplane}
    \Kb_{(j)}=\underline{\Kb_{(j)}}(\Delta( y^2\cdot),\bar{\partial}(y\cdot),\partial (y\cdot)),
\end{equation}
and $\underline{\Kb_{(j)}}$ being a polynomial of degree $j.$ Moreover, if we assign the weight $-1$ to the differentiation and the weight $1$ to the multiplication by $y$, with weights adding under the multiplication, then all monomials in $\underline{\Kb_{(j)}}$ have weight $0$.
\end{prop}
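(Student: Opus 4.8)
The plan is to reduce everything to formula \eqref{TransP} and to the analyticity of $f,g$. First I would expand $\Sbb_\P^j f=\frac{(2i)^j}{j!}\partial^j(y^j f)$ (using $z-\bar z=2iy$) by the Leibniz rule; since $\partial y=\tfrac{1}{2i}$ and $\partial^m f=f^{(m)}$ for analytic $f$, this collapses to a finite sum $\Sbb_\P^j f=\sum_{m=0}^{j}c_{j,m}(2i)^m y^m f^{(m)}$ with $c_{j,m}=\binom{j}{m}/m!$, involving at most $j$ derivatives of $f$. Conjugating gives $\overline{\Sbb_\P^j g}=\sum_{n=0}^{j}c_{j,n}(-2i)^n y^n\,\bar\partial^n\bar g$. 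Multiplying the two and inserting into $(a,\Sbb_\P^j f\,\overline{\Sbb_\P^j g})$ produces a double sum of terms $(a,\,y^{m+n}(\partial^m f)(\bar\partial^n\bar g))$.

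The next step is integration by parts to move all derivatives off $f,\bar g$ and onto $a$ (legitimate since $a$ is a distribution and the functions are smooth; one may first take $a$ smooth of compact support and then appeal to the distributional pairing set up in Sections~2--3). Here analyticity is decisive: because $\partial(\bar\partial^n\bar g)=0$ and $\bar\partial f=0$, stripping $\partial^m$ off $f$ never hits the factor $\bar\partial^n\bar g$, and stripping $\bar\partial^n$ off $\bar g$ never hits $f$, so no cross terms survive and one gets cleanly $(a,\,y^{m+n}(\partial^m f)(\bar\partial^n\bar g))=(-1)^{m+n}(\partial^m\bar\partial^n(y^{m+n}a),\,f\bar g)$. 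Summing yields $\langle(\Kb_{(j)}a)f,g\rangle$ with $\Kb_{(j)}a=\sum_{m,n=0}^{j}\kappa_{m,n}\,\partial^m\bar\partial^n(y^{m+n}a)$, where $\kappa_{m,n}=c_{j,m}c_{j,n}(-2i)^m(2i)^n$. The term $m=n=j$ has $\kappa_{j,j}\neq0$, so $\Kb_{(j)}$ has order exactly $2j$; each summand pairs $m+n$ derivatives with the factor $y^{m+n}$, hence has weight $0$, so $\Kb_{(j)}$ is weight $0$. I would verify the case $j=1$ by hand as a check, obtaining $\Kb_{(1)}=1+\Delta(y^2\cdot)-2i\,\partial(y\cdot)+2i\,\bar\partial(y\cdot)$.

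It remains to rewrite $\Kb_{(j)}$ as a degree-$j$ polynomial in the three weight-$0$ generators $\partial(y\cdot),\ \bar\partial(y\cdot),\ \Delta(y^2\cdot)$. I would grade the algebra of weight-$0$ operators by bidegree, declaring $\partial^m\bar\partial^n(y^{m+n}\cdot)$ to have bidegree $(m,n)$; the commutators $[\partial,y]=\tfrac{1}{2i}$, $[\bar\partial,y]=-\tfrac{1}{2i}$ show that reordering a product strictly lowers the bidegree, so these normal-ordered operators form a basis and every product of generators is governed by its leading bidegree. The three generators are exactly the basis elements of bidegree $(1,0)$, $(0,1)$, and (four times) $(1,1)$. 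Hence a bidegree-$(m,n)$ basis operator is the leading term of $\big(\partial(y\cdot)\big)^{m-c}\big(\bar\partial(y\cdot)\big)^{n-c}\big(\tfrac14\Delta(y^2\cdot)\big)^{c}$ for any $0\le c\le\min(m,n)$, of polynomial degree $m+n-c$. Taking $c=\min(m,n)$ realizes bidegree $(m,n)$ at polynomial degree $\max(m,n)$, and a downward induction on bidegree clears the lower-order remainder without raising the degree. Since every term of $\Kb_{(j)}$ has $m,n\le j$, this exhibits $\Kb_{(j)}$ as a polynomial of degree $\le j$ in the three generators, of degree exactly $j$ because of the surviving top $\big(\Delta(y^2\cdot)\big)^{j}$ term; each monomial is automatically weight $0$.

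The main obstacle is precisely this degree bound. A priori the order-$2j$ term $\partial^j\bar\partial^j(y^{2j}\cdot)$ would cost degree $2j$ if assembled from $\partial(y\cdot)$ and $\bar\partial(y\cdot)$ alone; only the generator $\Delta(y^2\cdot)$, which injects bidegree $(1,1)$ at unit polynomial cost, keeps the degree at $j$. This succeeds exactly because $f$ and $\bar g$ each contribute at most $j$ derivatives, so every bidegree occurring in $\Kb_{(j)}$ has $m,n\le j$ and hence $\max(m,n)\le j$ --- the operator-level shadow of the product structure $\Sbb_\P^j f\cdot\overline{\Sbb_\P^j g}$. By comparison, the integration by parts and the weight bookkeeping are routine.
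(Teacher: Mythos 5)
Your proof is correct, and it actually fills in more than the paper's own argument does, although both rest on the same core mechanism: pair against $a$, move every derivative onto $a$ by distributional integration by parts, and use $\bar{\partial}f=0$, $\partial\bar{g}=0$ to kill all cross terms. The difference is in execution. The paper carries out the computation explicitly only for $j=1$ and $j=2$, commuting $a$ past the factors of $\partial(yf)$ and $\partial^2(y^2f)$ step by step, and then asserts that the general case follows by ``the same machinery with some tedious bookkeeping''; in particular, the claim that $\underline{\Kb_{(j)}}$ has degree $j$ is left to ``simple bookkeeping.'' You instead expand $\Sbb_\P^j f=\sum_{m\le j}\binom{j}{m}\frac{(2i)^m}{m!}\,y^m f^{(m)}$ once and for all (Leibniz plus analyticity, using $z-\bar z=2iy$), which yields the closed formula
\begin{equation*}
\Kb_{(j)}=\sum_{m,n\le j}\kappa_{m,n}\,\partial^m\bar{\partial}^n\bigl(y^{m+n}\,\cdot\bigr),\qquad \kappa_{m,n}=\tbinom{j}{m}\tbinom{j}{n}\tfrac{(-2i)^m(2i)^n}{m!\,n!},
\end{equation*}
so that $\kappa_{j,j}=4^j/(j!)^2\neq0$ gives order exactly $2j$ immediately, and your normal-ordering/bidegree filtration argument supplies precisely the piece the paper never writes down: why each $\partial^m\bar{\partial}^n(y^{m+n}\cdot)$ with $m,n\le j$ is a polynomial of degree $\le\max(m,n)\le j$ in the three weight-zero generators, the generator $\Delta(y^2\cdot)$ being indispensable for realizing bidegree $(1,1)$ at unit polynomial cost. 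Your check for $j=1$ agrees with the paper's computation (up to the paper's loose numerical constants). What the paper's route buys is concreteness --- the explicit operators $\Kb_1,\Kb_2,\Kb_3$ for $j=2$ that make the structure visible; what yours buys is a genuine proof valid for all $j$, with exact coefficients, so that \eqref{transPi} holds as an identity rather than only up to the numerical factor that the paper later has to allow for in Theorem \ref{ThEquPi}.
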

\begin{proof}We demonstrate the reasoning for the case $j=1$. The general case uses the same machinery with some tedious bookkeeping.
We set $u=\Sbb_\P f$, $v=\Sbb_\P g$ and consider the sesquilinear form $\Fb_{a}[f,g]=\langle a\partial (y f),\partial (y g)\rangle\equiv (a,\partial (y f)\overline{\partial (y g)} )$ for $f,g$ being some elements in the standard orthonormal basis in $\Ac(\Pi)$. Due to $\partial^*=-\bar{\partial}$:
\begin{gather}\label{transHalfplane2}
    (a,\partial (y f)\overline{\partial (y g)} ) = (a,(-if+ y\partial f)\overline{(-ig+y\partial g)})\\ \nonumber
    =(a,f\bar{g})+(a,-ify\bar{\partial g})+(a,i\bar{g}y\partial f )+(a,y^2(\partial f) {\bar{\partial} \bar{g}})
    \end{gather}
   (the last transformation uses $\bar{\partial}g=0$). For the second term on the right-hand side in \eqref{transHalfplane2}, by the general rules of manipulation with distributions, we have
\begin{equation*}
 (a,-ify\bar{\partial g})= (-iya, f\overline{{\partial g}})=(-iya,\overline{{\partial}(\bar{f}g)})-(-iyF,(\bar{\partial}f)g )
 = (\bar{\partial}(iya),f\bar{g}),
\end{equation*}
because $\bar{\partial}f=0$. The third term on the right in  \eqref{transHalfplane2} is transformed in a similar way, and for the last one,
 \begin{gather*}%\label{transHalfPlane4}
   (a,y^2\partial f{\bar{\partial} \bar{g}} )=(y^2a,  \bar{\partial}(\partial f \bar{g})-\bar{\partial}(\partial f)\bar{g} ) =-(\bar{\partial}(y^2 a),(\partial f) \bar{g} )\\ \nonumber
   =-(\bar{\partial}(y^2 a), \partial(f \bar{g})-\{f \partial\bar{g}\})=(\partial\bar{\partial}(y^2 a), f \bar{g}),
  \end{gather*}
   again, the terms in curly bracket vanishing due to  $\bar{\partial}g=0$. Collecting the terms in \eqref{transHalfplane2}, after simple transformations, we obtain the required relation.

For higher order, the procedure of transformation is  similar, by means of formally commuting   $a$ and factors in the creation operators $\Sbb_\P^j$ in the expression $\langle a\Sbb_\P^j f,\Sbb_\P^j g\rangle\equiv(a, \Sbb_\P^j f\overline{\Sbb_\P^j g})$, so that the Cauchy-Riemann operator falls on the functions $f,g$, while any commutation with $a$ produces a derivative of $a$. It remains to notice that when commuting  the terms in the expression on the left-hand side in \eqref{transPi}, the weight of the terms does not change. Alternatively, one can make the calculations similar to the ones shown above, again by moving the Cauchy-Riemann operator to the functions $f,g$ and on $F$.

To make the general reasoning more transparent, we present here  our transformations for the case $j=2$. So, we set $u=\Sbb_{\Pi}^2 f, v=\Sbb_{\Pi}^2 g$.
We start with
\begin{equation}\label{j2.start}
 \Fb_a[u,v]=   \Fb_a[\Sbb_{\Pi}^2 f, \Sbb_{\Pi}^2 g]=(  (\Sbb_{\Pi}^2 f) a,  \overline{\Sbb_{\Pi}^2 g})=-2( (\partial^2 ( y^2 f)\times a), \bar{\partial}^2(y^2\bar{g})).
\end{equation}
We expand in \eqref{j2.start} the derivatives of the product by the Leibnitz formula, to obtain

\begin{equation}\label{j2.1}
    \Fb_a[u,v]=(\partial^2 (( y^2f)a)-2\partial((y^2f)\partial a)+y^2 f\partial^2 a,\bar{ \partial}^2 (y^2 \bar{g}).
\end{equation}
Now we carry over the derivatives $\partial, \partial^2$ to the second factor in \eqref {j2.1} (this is legal due to the definition of the derivatives of distributions):
\begin{gather}\label{j2.2}
\Fb_a[u,v]= ((y^2 f)a, \partial^2(\bar{ \partial}^2 (y^2 \bar{g})))\\\nonumber
+2((y^2f)\partial a, \partial(\bar{ \partial}^2 (y^2 \bar{g})))+
(y^2 f\partial^2 a,\bar{ \partial}^2 (y^2 \bar{g}).
\end{gather}

We consider then the terms in \eqref{j2.2} separately. In the first term, we commute $\partial^2$ and $\bar{\partial}^2$ in the second factor:
\begin{equation*}
%\begin{equation}\label{j2.3}
     \partial^2(\bar{ \partial}^2 (y^2 \bar{g}))=\bar{\partial}^2({ \partial}^2 (y^2 \bar{g}))=\textstyle{\frac12}\bar{\partial}^2(\bar{g}),
%\end{equation}
\end{equation*}

since $\partial\bar{g}=0.$ Therefore, the first term in \eqref{j2.2} transforms to
\begin{gather*}%\label{j2.4}
   \textstyle{\frac12} ((y^2 f)a,\bar{\partial}^2 \bar{g})=\textstyle{\frac12}(\bar{\partial}^2(y^2f a),\bar{g})=\textstyle{\frac12}(f \bar{\partial}^2(y^2 F),\bar{g})\\ \nonumber=(f\Kb_1a ,\bar{g})=(\Kb_1a, f\bar{g}),
\end{gather*}
with $\Kb_1a$ being the distribution
\begin{equation*}%\label{j2.5}
\Kb_1a=\textstyle{\frac12}\bar{\partial}^2(y^2 a).
\end{equation*}

 Next, for the second term in \eqref{j2.2}, we have
 \begin{gather*}%\label{j2.6}
2((y^2f)\partial a, \partial(\bar{ \partial}^2 (y^2 \bar{g})))=-2((y^2f)\partial a, \bar{ \partial}^2\partial (y^2 \bar{g})))=\\\nonumber
2((y^2f)\partial a, \bar{ \partial}^2(y \bar{g}))=2(\bar{ \partial}^2(y^2 f \partial a), y\bar{g}).
\end{gather*}
Now,
\begin{equation*}%\label{j2.7}
    \bar{ \partial}^2(y^2 f \partial a)=f\bar{ \partial}^2(y^2\partial a)=f(2\partial a+2y \bar{ \partial}\partial a +y^2 \bar{ \partial}^2\partial a).
\end{equation*}
Thus, the second term in \eqref{j2.2} equals to
\begin{equation*}
    2((y^2f)\partial a, \partial(\bar{ \partial}^2 (y^2 \bar{g})))=(f\Kb_2 a,\bar{g} )=(\Kb_2 a,f\bar{g} ),
\end{equation*}
where
\begin{equation*}%\label{j2.8}
    \Kb_2 a=y(2\partial a+2y \bar{ \partial}\partial a +y^2 \bar{ \partial}^2\partial a).
\end{equation*}
Finally,  the third term in \eqref{j2.2} is transformed as
\begin{gather*}
    (y^2 f\partial^2 a,\bar{ \partial}^2 (y^2 \bar{g})=(\bar{ \partial}^2f y^2 \partial^2 a,y^2 \bar{g})=\\\nonumber (f \bar{\partial}^2(y^2 a), y^2 \bar{g})= (fy^2  \bar{\partial}^2(y^2 a), \bar{g})=(f \Kb_3 a,\bar{g} )=(\Kb_3 a, f\bar{g} ),
\end{gather*}
where $\Kb_3 a= y^2  \bar{\partial}^2(y^2 a).$
A simple bookkeeping shows that the operators $\Kb_1,\Kb_2,\Kb_3$ have the structure claimed by the theorem, $\Kb_{(2)}=\Kb_1+\Kb_2+\Kb_3$.
Note again that although the distribution $a$ does not necessarily have compact support in $\Pi$, our definition of derivatives of such distributions conserves the formal differentiation rules we used in these calculations.
\end{proof}

 As explained above, the equality \eqref{transPi} extends to the whole of $\Ac_1$, as soon as we know that the right-hand side or on the left-hand side is a bounded sesquilinear form in $\Ac_1$. Thus, the statement of Proposition \ref{PropTranshalfplane} can be formulated as the following theorem.

 \begin{theorem}\label{ThEquPi}The operators
 $\Tb_a(\Ac_{(j+1)})$ and $\Tb_{\Kb a}(\Ac_1)$ are unitarily equivalent (up to a numerical factor) as soon as one of them is bounded. In this case, if one of these operators is compact, or belongs to a Schatten class, or  is of finite rank, or zero, then the same holds for the other one.
 \end{theorem}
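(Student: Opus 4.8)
The plan is to read off the asserted unitary equivalence directly from Proposition~\ref{PropTranshalfplane}, using that $\Sbb_\P^j$ is an isometry. Write $V=\Sbb_\P^j$. By the creation--annihilation structure recalled in Section~5 (the isometry relations \eqref{SBP} together with the identity $(\Sbb_\P)^j\Ac_{(1)}=\Ac_{(j+1)}$), the map $V\colon\Ac_1=\Ac_{(1)}\to\Ac_{(j+1)}$ is a \emph{surjective} isometry, hence a unitary operator between these two Hilbert spaces, with explicit action on $\Ac_1$ given by \eqref{TransP}. The scalar constants $(z-\bar z)^j/j!=(2\imath)^jy^j/j!$ appearing in \eqref{TransP} are exactly what produces the ``numerical factor'' in the statement; denote it by $c\neq0$.

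First I would rewrite the conclusion of Proposition~\ref{PropTranshalfplane}. For $f,g$ ranging over the standard orthonormal basis of $\Ac_1$, identity \eqref{transPi} says
\[
   \Fb_a[Vf,Vg]=c\,\langle \Tb_{\Kb a}f,g\rangle ,
\]
where $\Tb_{\Kb a}$ is the Toeplitz operator in $\Ac_1$ with symbol $\Kb_{(j)}a$. Both sesquilinear forms are initially defined only on the (dense) linear span of the basis, but the displayed equality shows that one of them is bounded on this span precisely when the other is. Hence, under the hypothesis that one of the two operators is bounded, both forms extend by continuity to bounded forms on the whole of $\Ac_{(j+1)}$, respectively $\Ac_1$, and therefore define genuine bounded operators $\Tb_a$ on $\Ac_{(j+1)}$ and $\Tb_{\Kb a}$ on $\Ac_1$; this is precisely the point at which the clause ``as soon as one of them is bounded'' is used.

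Next I would pass from forms to operators. Putting $u=Vf$, $v=Vg$ and recalling that $V$ maps $\Ac_1$ \emph{onto} all of $\Ac_{(j+1)}$, the extended identity reads $\langle\Tb_aVf,Vg\rangle=c\,\langle\Tb_{\Kb a}f,g\rangle$; since $\langle\Tb_aVf,Vg\rangle=\langle V^*\Tb_aVf,g\rangle$ and $f,g$ exhaust a basis, this yields
\[
   V^*\Tb_a V=c\,\Tb_{\Kb a},\qquad\text{equivalently}\qquad \Tb_a=c\,V\,\Tb_{\Kb a}\,V^* ,
\]
the second form following by multiplying on the left by $V$ and on the right by $V^*$ and using $VV^*=I$ on $\Ac_{(j+1)}$. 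Since $V$ is unitary and $c\neq0$, this is exactly a unitary equivalence up to the scalar $c$.

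Finally, the transfer of operator-ideal properties is automatic from $\Tb_a=c\,V\Tb_{\Kb a}V^*$ with $V$ unitary: one has $s_n(\Tb_a)=|c|\,s_n(\Tb_{\Kb a})$ for all $n$, so $\Tb_a$ is compact iff $\Tb_{\Kb a}$ is, lies in the Schatten ideal $\SF_p$ iff $\Tb_{\Kb a}$ does (with $\SF_p$-norms differing by the factor $|c|$), has the same rank, and is zero iff $\Tb_{\Kb a}$ is zero. The only genuinely delicate points are the bookkeeping of the constant $c$ coming from \eqref{TransP} (so that the ``numerical factor'' is consistent across all $j$) and the density argument of the second paragraph that upgrades boundedness of a form on the span of the basis to an operator identity intertwined by $V$ on the entire space; neither presents a real obstacle once $V$ is known to be unitary.
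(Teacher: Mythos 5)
Your proof is correct and follows essentially the same route as the paper, which deduces the theorem directly from Proposition~\ref{PropTranshalfplane} together with the extension argument laid out at the beginning of Section~6: the surjective isometry $\Sbb_\P^j\colon\Ac_1\to\Ac_{(j+1)}$ intertwines the two sesquilinear forms on the basis, boundedness of either form forces boundedness of the other, and the resulting operator identity transfers compactness, Schatten membership, finite rank and vanishing. The only cosmetic discrepancy is your attribution of the numerical factor to the constants $(2\imath)^j/j!$ in \eqref{TransP} as pure scalars (they in fact come attached to powers of $y$ and are absorbed into the normalization of $\Kb_{(j)}$), but this does not affect the argument.
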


 The terms in the differential operator $\Kb_{(j)}$ can be regrouped so that it takes the form
 \begin{equation*}%\label{Kb regrouped}
    \Kb_{(j)}=\sum_{p+\bar{p}+2q\le 2j}b_{p,\bar{p},q}(\partial)^p(\bar{\partial})^{\bar{p}}\D^q y^{p+\bar{p}+2q}.
 \end{equation*}

Now we can apply the boundedness conditions obtained earlier, in Section 3, for differential sesquilinear forms to obtain boundedness conditions for Toeplitz operators in true poly-Bergman spaces.
 \begin{theorem}\label{ThBddPoly}Let $\m$ be a measure on $\Pi$ such that $y^k\m$ are $k$-C measures for $\Ac_1$ for $k=0,1,\dots, 2j$.   Then the sesquilinear form $\int f\bar{g}d\m$ is bounded in $\Ac_{(j+1)}$ and defines a bounded Toeplitz operator in $\Ac_{(j+1)}$. If, moreover, $y^k\m$ are vanishing $k$-C measures for $\Ac_1$, then the corresponding operator in $\Ac_{(j+1)}$ is compact.
 \end{theorem}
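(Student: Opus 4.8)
The plan is to transport the entire question to the analytic Bergman space $\Ac_1$, where the apparatus of Section~3 is available. By Theorem~\ref{ThEquPi} the operator $\Tb_\m$ acting in $\Ac_{(j+1)}$ is unitarily equivalent, up to a numerical factor, to $\Tb_{\Kb_{(j)}\m}$ acting in $\Ac_1$; equivalently, the form $\int f\bar g\,d\m$ on $\Ac_{(j+1)}$ is carried by the surjective isometry $\Sbb_\P^{j}$ to the form $(\Kb_{(j)}\m,f\bar g)$ on $\Ac_1$. Hence it suffices to prove that $(\Kb_{(j)}\m,f\bar g)$ is a bounded sesquilinear form on $\Ac_1$ (and, under the stronger hypothesis, that the associated operator is compact).

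Next I would feed in the explicit regrouped expression $\Kb_{(j)}=\sum_{p+\bar p+2q\le 2j}b_{p,\bar p,q}\,\partial^{p}\bar\partial^{\bar p}\Delta^{q}\,y^{p+\bar p+2q}$, so that $\Kb_{(j)}\m$ is a finite sum of distributions $b_{p,\bar p,q}\,\partial^{p}\bar\partial^{\bar p}\Delta^{q}(y^{p+\bar p+2q}\m)$. Since $\Delta^{q}$ is a constant multiple of $\partial^{q}\bar\partial^{q}$, each summand has the form $\partial^{\alpha}\bar\partial^{\beta}(y^{\alpha+\beta}\m)$ with $\alpha=p+q$, $\beta=\bar p+q$ and total order $\alpha+\beta=p+\bar p+2q\le 2j$. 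The decisive structural feature is the weight-$0$ homogeneity recorded in Proposition~\ref{PropTranshalfplane}: the exponent of $y$ in each term equals the total order of differentiation $\alpha+\beta$, so that no mismatched powers of $y$ survive.

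Testing a summand against $f\bar g$ with $f,g\in\Ac_1$ and moving the derivatives onto $f\bar g$ (legitimate by our convention for distributional derivatives of measures without compact support), the relations $\bar\partial f=0$, $\partial\bar g=0$ collapse the Leibniz expansion to $\partial^{\alpha}\bar\partial^{\beta}(f\bar g)=(\partial^{\alpha}f)\overline{(\partial^{\beta}g)}$. Thus each summand is precisely the differential form $\Fb_{\alpha,\beta,\,y^{\alpha+\beta}\m}[f,g]=\pm\int_\Pi\partial^{\alpha}f\,\overline{\partial^{\beta}g}\,d(y^{\alpha+\beta}\m)$ of Theorem~\ref{Th.concrete.der}, with the weighted measure $y^{\alpha+\beta}\m$ playing the role of the symbol-measure. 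The hypothesis that $y^{k}\m$ is a $k$-C measure for $k=0,1,\dots,2j$ is tailored exactly to this list: the term of total order $k=\alpha+\beta$ carries the weighted measure $y^{k}\m$, and its $k$-C property (via Theorem~\ref{main theorem}, combined with the Cauchy--Schwarz splitting of the two derivative orders $\alpha,\beta$ that underlies Theorem~\ref{Th.concrete.der}) yields a bound $|\Fb_{\alpha,\beta,\,y^{k}\m}[f,g]|\le C_{k}\|f\|\,\|g\|$. Summing the finitely many bounded forms gives boundedness of $(\Kb_{(j)}\m,f\bar g)$, hence of $\Tb_\m$ in $\Ac_{(j+1)}$.

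For the compactness assertion I would run the same decomposition but replace the boundedness input by the vanishing input: if each $y^{k}\m$ is a \emph{vanishing} $k$-C measure, i.e.\ satisfies the limit condition \eqref{Mk12c}, then every summand defines a compact operator in $\Ac_1$ by Theorem~\ref{Th.Comp}, and a finite sum of compact operators is compact; transporting back by $\Sbb_\P^{j}$ gives compactness in $\Ac_{(j+1)}$. The step I expect to be the real work is the one in the second paragraph: carrying out the expansion of $\Kb_{(j)}$ and verifying the weight-$0$ bookkeeping, so that every resulting term is genuinely of the clean type $\partial^{\alpha}\bar\partial^{\beta}(y^{\alpha+\beta}\m)$ with $\alpha+\beta\le 2j$ and can be matched to the correct index $k$ in the hypothesis. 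This is exactly the ``tedious bookkeeping'' alluded to after Proposition~\ref{PropTranshalfplane}, and it is what singles out $k=0,\dots,2j$ as the natural set of assumptions.
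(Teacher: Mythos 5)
Your proposal is correct and takes essentially the same route as the paper: Theorem~\ref{ThBddPoly} is obtained there exactly by combining the unitary equivalence of Theorem~\ref{ThEquPi} with the regrouped, weight-zero expansion of $\Kb_{(j)}$ into terms $\partial^{\alpha}\bar\partial^{\beta}(y^{\alpha+\beta}\m)$, $\alpha+\beta\le 2j$, and then invoking the Section~3 criteria (Theorems~\ref{Th.concrete.der} and~\ref{Th.Comp}) term by term. The paper states the theorem without writing out this combination, so your version simply makes explicit (including the finite-sum and transport-back steps) what the paper leaves implicit.
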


\end{document}